\documentclass[12pt]{amsart}
\usepackage{xy}
\xyoption{all}
\usepackage[pdftitle={Weil representation associated to a finite Heisenberg group}, pdfauthor={Amritanshu Prasad}, pdfkeywords={Weil representation, finite Heisenberg groups}, pdfdisplaydoctitle={true}]{hyperref}
\DeclareMathSizes{12}{10}{8}{6}

\numberwithin{equation}{section}
\theoremstyle{plain}
\newtheorem{theorem}{Theorem}[section]
\newtheorem*{theorem*}{Theorem}
\newtheorem{prop}[theorem]{Proposition}
\newtheorem*{prop*}{Proposition}
\newtheorem{lemma}[theorem]{Lemma}
\newtheorem*{lemma*}{Lemma}
\newtheorem{cor}[theorem]{Corollary}
\newtheorem*{cor*}{Corollary}

\theoremstyle{definition}

\newtheorem*{defn*}{Definition}
\newtheorem{remark}[theorem]{Remark}
\newtheorem*{remark*}{Remark}
\newtheorem{example}[theorem]{Example}

\newcommand{\C}{\mathbf C}
\newcommand{\dash}{\nobreakdash-}

\newcommand{\Hilb}{\mathcal H}
\newcommand{\inv}{^{-1}}

\newcommand{\so}{\sigma_1}
\newcommand{\st}{\sigma_2}
\newcommand{\adjoint}[1]{{#1}^*}

\newcommand{\W}{\mathcal W}
\newcommand{\Z}{\mathbf Z}

\DeclareMathOperator{\Aut}{Aut}
\DeclareMathOperator{\End}{End}
\DeclareMathOperator{\tr}{tr}

\title[The Weil representation]{On character values and decomposition\\of the Weil representation associated\\to a finite abelian group}
\author{{Amritanshu Prasad}}

\address{\href{http://www.imsc.res.in/\string~amri}{The Institute of Mathematical Sciences, Chennai.}}
\email{amri@imsc.res.in}
\keywords{Weil representation, finite Heisenberg group}
\subjclass[2000]{11F27}
\begin{document}
\begin{abstract}
We develop a simple algebraic approach to the study of the Weil representation associated to a finite abelian group. 
As a result, we obtain a simple proof of a generalisation of a well-known formula for the absolute value of its character.
 We also obtain a new result about its decomposition into irreducible representations.
 As an example, the decomposition of the Weil representation of $Sp_{2g}(\Z/N\Z)$ is described for odd $N$.
\end{abstract}
\maketitle
\section{Introduction}
Hermann Weyl introduced the Heisenberg group as part of his mathematical formulation of quantum kinematics \cite{Weyl50}.
The fundamental property of Heisenberg groups, now known as the Stone-von Neumann theorem, was predicted by Weyl, proved in the real case by Marshall Stone \cite{Stone30} and John von Neumann \cite{vN31}, and extended by George Mackey to locally compact abelian groups \cite{Mackey49}.
A consequence of the Stone-von Neumann-Mackey theorem is that between any two irreducible unitary representations of the Heisenberg group with identity central character, there is a unitary intertwiner which is unique up to scaling.
An automorphism of the Heisenberg group that fixes its centre can be used to twist an irreducible representation of the Heisenberg group with identity central character to give another one.
The resulting intertwiners were studied by Andr\'e Weil with a view towards applications in number theory \cite{MR0165033}.
They give rise to a projective representation of the group of automorphisms of the Heisenberg group that fix its centre, known as the Weil representation.

The Heisenberg group provides an elegant conceptual framework through which one may view the theory of the Fourier transform (see Example~\ref{example:FT}).
In this framework, the operators in the Weil representation and the Fourier transform have the same origin.
The role of the Heisenberg group in harmonic analysis is discussed in Roger Howe's expository article \cite{MR578375}.

Our interest in the Weil representation comes from representation theory and number theory.
The Weil representation can be used to construct all the representations of $SL(2)$ over a finite field \cite{MR0219635}, and plays an important role in the construction of strongly cuspidal representations of groups over local rings \cite{MR0233931,MR0396859,MR0492087,AOPS}, which are in turn used to construct supercuspidal representations of groups over non-Archimedean local fields.

Roger Howe gave a simple formula for the absolute value of the character of the Weil representation when the underlying abelian group is the additive group of a finite dimensional vector space over a field of odd order \cite[Proposition~2]{MR0316633}.
Heisenberg groups associated to more general finite abelian groups are also of great interest, and arise in, for example, the study of abelian varieties \cite{MR0282985} and representations of automorphism groups of torsion modules over local rings \cite{MR2456275}.
Here we give a different proof of Howe's result, which besides being simpler, works when the underlying abelian group is any abelian group of odd order.
In fact, we deduce it from a more general result, Theorem~\ref{theorem:abs-trace}, which holds even when the abelian group has even order.
Howe's surprising proof proceeds by first establishing Corollary~\ref{cor:abs-char} and deducing Theorem~\ref{theorem:absolute-char-odd} as a formal consequence.
We deduce both Theorem~\ref{theorem:absolute-char-odd} and Corollary~\ref{cor:abs-char} from the more general Theorem~\ref{theorem:abs-trace}, which can not be proved by Howe's method.

Our method also proves to be useful in understanding the decomposition of the Weil representation into irreducible summands.
The main result (Theorem~\ref{theorem:decomposition}) gives a basis for the space of self-intertwiners of the Weil representation of any group of automorphisms of the Heisenberg group that fix its centre.
Again, the results take a simpler form when the underlying abelian group has odd order.
In Section~\ref{sec:example} we illustrate our results by working out the decomposition of the Weil representation of $Sp_{2g}(\Z/N\Z)$ for odd $N$.

Lemma~\ref{lemma:basis} and the fact that it can be used to compute the absolute value of the the character appears in many places in the literature (see, for example, \cite[Remark~2.11]{MR1224052}).

We are grateful to K.~R.~Parthasarathy, Dipendra Prasad, S.~Ramanan and M.~K.~Vemuri for some stimulating discussions which led us towards the ideas in this paper.

\section{The Heisenberg Group}
\label{sec:Heisenberg-group}
Let $A$ be a finite abelian group.
Let $\hat A$ denote its Pontryagin dual, the group of homomorphisms $A\to U(1)$.
On $L^2(A)$ we have
\begin{itemize}
\item Translation operators: $T_xf(u)=f(u-x)$ for $x\in A$.
\item Modulation operators: $M_\chi f(u)=\chi(u)f(u)$ for $\chi\in \hat A$.
\end{itemize}
Let $K=A\times \hat A$.
For $k=(x,\chi)\in K$, let $W_k=T_xM_\chi$.
The unitary operators $W_k$ on $L^2(A)$ are called Weyl operators.
Operators of the form $zW_k$, with $z\in U(1)$, $k\in K$ form a subgroup $G$ of $U(L^2(A))$ under composition, which is known as the Heisenberg group associated to $A$.
When $k=(x,\chi)$ and $l=(y,\lambda)$ the group law is described by
\begin{equation}
  \label{eq:5}
  W_k W_l=\chi(y)W_{k+l}.
\end{equation}
If $\pi(zW_k)=k$ and $j(z)=zW_0$,
then $\pi:G\to K$ and $j:U(1)\to G$ are homomorphisms.
The image of $j$, which is the kernel of $\pi$, is the centre of $G$.
\section{The theorem of Stone, von Neumann and Mackey}
\label{sec:SvNM}
\begin{theorem}
  \label{theorem:SvNM}
  No non-trivial proper subspace of $L^2(A)$ is invariant under all the operators in $G$.
  Moreover, if $\rho:G\to U(\Hilb)$ is an irreducible unitary representation such that $\rho(j(z))=zI$, there exists an isometry $W:L^2(A)\to \Hilb$ such that $W(gf)=\rho(g)W(f)$ for all $f\in L^2(A)$ and $g\in G$.
\end{theorem}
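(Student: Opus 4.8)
The plan is to establish the two assertions in turn, the crucial object in both being the element
\[
  e_0 \;=\; \frac{1}{|\hat A|}\sum_{\chi\in\hat A} W_{(0,\chi)}
\]
of the algebra of operators on $L^2(A)$ spanned by the Weyl operators. Acting on $L^2(A)$, $e_0$ is the orthogonal projection onto $\C\delta_0$. From the group law \eqref{eq:5} and the fact that $\sum_{\chi\in\hat A}\chi(u)$ equals $|\hat A|$ if $u=0$ and vanishes otherwise, one reads off the operator identities
\begin{gather*}
  e_0^2 = e_0, \qquad W_{(0,\lambda)}\,e_0 = e_0 = e_0\,W_{(0,\lambda)} \quad (\lambda\in\hat A),\\
  \sum_{x\in A} W_{(x,0)}\,e_0\,W_{(x,0)}\inv = I, \qquad e_0\,W_{(y,0)}\,e_0 = 0 \quad (0\neq y\in A),
\end{gather*}
together with $e_0^{*}=e_0$ (as the $W_k$ are unitary). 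These identities involve only \eqref{eq:5} and character orthogonality, so they hold equally for the operators $\rho(W_k)$ attached to any representation $\rho$ of $G$ with $\rho(j(z))=zI$; below I write $e_0$ also for the corresponding operator $\frac{1}{|\hat A|}\sum_{\chi}\rho(W_{(0,\chi)})$ on $\Hilb$.

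For the first assertion, let $V\neq 0$ be a subspace of $L^2(A)$ invariant under $G$. Choose a nonzero $f\in V$; replacing $f$ by a suitable translate $W_{(x,0)}f$, still in $V$, we may assume $f(0)\neq 0$, and then $e_0 f = f(0)\,\delta_0$ lies in $V$. Hence $\delta_0\in V$, so $\delta_x = W_{(x,0)}\delta_0\in V$ for every $x\in A$, and therefore $V=L^2(A)$.

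For the second assertion, let $\rho:G\to U(\Hilb)$ be irreducible with $\rho(j(z))=zI$. The operator $\rho(e_0)$ is a self-adjoint idempotent, and it is nonzero: otherwise the identity $\sum_{x\in A}\rho(W_{(x,0)})\,\rho(e_0)\,\rho(W_{(x,0)})\inv=I$ would give $I=0$ on $\Hilb\neq 0$. Choose a unit vector $v_0$ in the range of $\rho(e_0)$, and define $W:L^2(A)\to\Hilb$ on the orthonormal basis $\{\delta_x:x\in A\}$ by $W\delta_x=\rho(W_{(x,0)})v_0$, extended by linearity.

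It remains to show that $W$ is an isometry intertwining the two $G$-actions. For the isometry, \eqref{eq:5} gives $\langle W\delta_x,W\delta_{x'}\rangle = \langle v_0,\rho(W_{(x,0)}\inv W_{(x',0)})v_0\rangle = \langle v_0,\rho(W_{(x'-x,0)})v_0\rangle$, and since $\rho(e_0)v_0=v_0$ with $\rho(e_0)$ self-adjoint this equals $\langle v_0,\rho(e_0\,W_{(x'-x,0)}\,e_0)v_0\rangle$, which is $1$ for $x'=x$ and $0$ for $x'\neq x$ by the two displayed identities — so $W$ carries an orthonormal basis to an orthonormal set. The intertwining relation $W(gf)=\rho(g)W(f)$ need only be checked for $f=\delta_x$ and $g$ among the generators $j(z)$, $W_{(y,0)}$, $W_{(0,\lambda)}$ of $G$; in each case \eqref{eq:5}, the multiplicativity of $\rho$, and the relation $\rho(W_{(0,\lambda)})v_0=v_0$ (from $W_{(0,\lambda)}e_0=e_0$) identify the two sides. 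Finally, the image of the isometry $W$ is a nonzero $G$-invariant subspace, so by irreducibility $W$ is onto and hence unitary. The only step here that is not pure bookkeeping is the vanishing $e_0\,W_{(y,0)}\,e_0=0$ for $y\neq 0$: it encodes the fact that $\delta_0$ generates $L^2(A)$ with minimal overlap under the Heisenberg action, and it is exactly what forces the vectors $W\delta_x$ to be mutually orthogonal.
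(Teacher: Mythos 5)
Your proof is correct, but it takes a genuinely different route from the paper. The paper's entire argument rests on Lemma~\ref{lemma:basis}: by viewing the $W_k$ as eigenvectors with distinct eigencharacters for the conjugation action of $K$ on $\End_\C(L^2(A))$, it shows they form a basis of $\End_\C(L^2(A))$; the first assertion is then immediate, and the second follows by making $\Hilb$ into a simple module over the matrix algebra $\End_\C(L^2(A))$ compatibly with the $\C^*$-structure and invoking the structure theory of that algebra. You instead run the classical Stone--von Neumann/Mackey ``vacuum vector'' argument: the averaged modulation operator $e_0$ is a rank-one projection onto $\C\delta_0$, its defining identities are consequences of \eqref{eq:5} and character orthogonality alone and hence transfer to any $\rho$ with $\rho(j(z))=zI$, and a unit vector in the range of the transferred idempotent gives an explicit intertwining isometry $\delta_x\mapsto\rho(W_{(x,0)})v_0$, with the relation $e_0W_{(y,0)}e_0=0$ ($y\neq 0$) forcing orthonormality of the image vectors. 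Your version is more elementary and more explicit --- it avoids appealing to module theory over a simple algebra, constructs the intertwiner concretely, and even shows it is onto, hence unitary. What the paper's route buys is Lemma~\ref{lemma:basis} itself, which is not a by-product of your argument but is the workhorse for everything that follows (the trace computation in Proposition~\ref{prop:trace-kappa} and Theorem~\ref{theorem:abs-trace}, and the expansion $\phi=\sum_k\phi_kW_k$ underlying Theorems~\ref{theorem:centre} and~\ref{theorem:decomposition}); so in the context of this paper the basis lemma would still have to be proved separately even if one adopted your proof of Theorem~\ref{theorem:SvNM}.
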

Though this is an easy case of a well-known theorem, we include a proof since the ideas in this proof will be used later. 
We begin with
\begin{lemma}
  \label{lemma:basis}
  $\{W_k\;:\;k\in K\}$ is a basis of $\End_\C(L^2(A))$.
\end{lemma}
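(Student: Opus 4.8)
The plan is to reduce everything to a dimension count together with one orthogonality computation. Since $A$ is finite, $L^2(A)$ has dimension $|A|$ over $\C$, so $\End_\C(L^2(A))$ has dimension $|A|^2$; on the other hand $|K|=|A|\cdot|\hat A|=|A|^2$. Hence it is enough to show that the operators $W_k$, $k\in K$, are linearly independent. I would prove this by equipping $\End_\C(L^2(A))$ with the Hilbert--Schmidt inner product $\langle S,T\rangle=\tr(\adjoint S T)$ and checking that the family $\{W_k\}$ is orthogonal with respect to it (its members being nonzero, since they are invertible); a set of mutually orthogonal nonzero vectors is automatically linearly independent.

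Computing $\langle W_k,W_l\rangle$ reduces to evaluating a single trace. Each $W_k$ is unitary, so $\adjoint{W_k}=W_k\inv$; and the group law~\eqref{eq:5} gives $W_k\inv W_l=z\,W_{l-k}$ for a scalar $z\in U(1)$ depending only on $k$ and $l$, with $W_k\inv W_k=W_0=I$. So the whole lemma rests on the formula $\tr(W_m)=|A|$ when $m$ is the identity of $K$ and $\tr(W_m)=0$ otherwise, for $m\in K$.

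This trace is immediate from the description of the Weyl operators: writing $m=(x,\chi)$ and using the basis of delta functions $\delta_u$, $u\in A$, of $L^2(A)$, one has $W_m\delta_u=\chi(u)\,\delta_{u+x}$, so the coefficient of $\delta_u$ in $W_m\delta_u$ is nonzero only when $x=0$. Thus $\tr(W_m)=0$ for $x\neq 0$, while for $x=0$ we get $\tr(W_m)=\sum_{u\in A}\chi(u)$, which is $|A|$ if $\chi$ is trivial and $0$ otherwise by orthogonality of characters. Feeding this back yields $\langle W_k,W_l\rangle=|A|$ if $k=l$ and $0$ otherwise (the unimodular factor $z$ cannot revive a vanishing trace), which is exactly what is needed, and then the dimension count finishes the proof.

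The computation is entirely routine; the only points requiring a little care are the bookkeeping of the scalars coming from~\eqref{eq:5} — one must confirm that $z$ above genuinely lies in $U(1)$ so that it does not interfere with the vanishing of the trace — and the observation that $W_0$ is the identity operator, so that the diagonal terms give $\tr(I)=|A|\neq 0$. I remark that the trace formula for $W_m$ and the resulting orthogonality relations are precisely the facts that will reappear in the later computation of character values of the Weil representation.
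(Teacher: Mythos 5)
Your proof is correct, but it takes a different route from the paper. You establish linear independence via the Hilbert--Schmidt inner product: using unitarity and the group law~\eqref{eq:5} you reduce $\langle W_k,W_l\rangle=\tr(\adjoint{W_k}W_l)$ to a scalar in $U(1)$ times $\tr(W_{l-k})$, and the trace computation $\tr(W_{(x,\chi)})=|A|$ if $(x,\chi)=0$ and $0$ otherwise (translation kills the diagonal unless $x=0$, character orthogonality kills it unless $\chi$ is trivial) gives orthogonality; the dimension count $|K|=|A|^2=\dim\End_\C(L^2(A))$ then finishes. All steps check out, including the bookkeeping of the unimodular scalar and the identification $W_0=I$. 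The paper instead considers the conjugation representation $\kappa(k)(\phi)=W_k\phi\adjoint{W_k}$ of $K$ on $\End_\C(L^2(A))$ and observes that each $W_l$ is an eigenvector with eigencharacter $X_l(k)=\chi(y)\lambda(x)\inv$, and that $l\mapsto X_l$ is injective; eigenvectors for distinct characters are linearly independent, and the same dimension count applies. Your argument has the advantage of producing the explicit orthogonality relations (so expansion coefficients in the basis $\{W_k\}$ can be read off as normalized traces), while the paper's argument needs no inner product and yields as a byproduct the decomposition of $\kappa$ into the $|K|$ distinct characters of $K$, which is the content of the Remark following the lemma; your closing claim that your trace formula is what reappears in the later character computation is slightly off, since Proposition~\ref{prop:trace-kappa} only uses the basis property itself, not the orthogonality relations.
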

\begin{proof}
  Define a representation of $K$ on $\End_\C(L^2(A))$ by
  \begin{equation*}
    \kappa(k)(\phi)=W_k\phi\adjoint{W_k}.
  \end{equation*}
  For $k=(x,\chi)$ and $l=(y,\lambda)$ in $K$, one easily verifies that
  \begin{equation*}
    \kappa(k)(W_l)=\chi(y)\lambda(x)^{-1}W_l
  \end{equation*}
  so that $W_l$ is an eigenvector for $\kappa$ with eigencharacter
  \begin{equation*}
    X_{l}(k)=\chi(y)\lambda(x)^{-1}.
  \end{equation*}
  The map $l\mapsto X_l$ is an isomorphism of $K$ onto its Pontryagin dual.
  Thus, the $W_l$'s, being eigenvectors with distinct eigencharacters, are $|K|$ linearly independent elements of $\End_\C(L^2(A))$, hence a basis.
\end{proof}
\begin{remark}
  This proof of Lemma~\ref{lemma:basis} also gives a decomposition of the representation $\kappa$ of $K$ on $\End_\C(L^2(A))$, see also \cite[Lemma~3.1]{iafhg}.
\end{remark}
\begin{proof}[Proof of Theorem~\ref{theorem:SvNM}]
  By Lemma~\ref{lemma:basis}, any subspace of $L^2(A)$ which is invariant under all the operators $W_k$ must be invariant under all the operators in $\End_\C(L^2(A))$.
  The only such spaces are the trivial space and $L^2(A)$, proving the first assertion.

  By Lemma~\ref{lemma:basis}, any $\phi\in \End_\C(L^2(A))$ has an expansion
  \begin{equation*}
    \phi=\sum_{k\in K} \phi_k W_k
  \end{equation*}
  for uniquely determined scalars $\phi_k$.
  Setting
  \begin{equation*}
    \tilde \rho(\phi)=\sum_{k\in K} \phi_k \rho(W_k)
  \end{equation*}
  makes $\Hilb$ an $\End_\C(L^2(A))$\dash module.
  The irreducibility of $\rho$ implies that $\Hilb$ is a simple $\End_\C(L^2(A))$\dash module.
  The unitarity of $\rho$ implies that the $\C^*$\dash algebra structure on $\End_\C(L^2(A))$ is compatible with the inner product on $\Hilb$.
  Therefore, there exists an isometry $W:L^2(A)\to \Hilb$ such that $W(\phi (f))=\tilde\rho(\phi)(W(f))$ for all $\phi \in \End_\C(L^2(A))$ and all $f\in L^2(A)$.
  Restricting to the Heisenberg group gives $W(gf)=\rho(g)W(f)$.
\end{proof}
\begin{example}
  [The Fourier transform]
  \label{example:FT}
  A representation $\rho$ of $G$ on $L^2(\hat A)$ such that $\rho(j(z))=zI$ is defined by
  \begin{equation*}
    \rho(T_xM_\chi)=M_{-x}T_\chi \text{ for all } x\in A,\;\chi \in \hat A.
  \end{equation*}
  The isometry $W$ of Theorem~\ref{theorem:SvNM} is the Fourier transform $L^2(A)\to L^2(\hat A)$.
\end{example}
\section{Automorphisms of $G$ that fix the centre}
\label{sec:autos}
Suppose that $\sigma:G\to G$ is an automorphism such that $\sigma(j(z))=j(z)$ for each $z\in U(1)$.
Then $\sigma(zW_k)=z\sigma(W_k)$.
Write $\sigma(W_k)$ as $\so(k)W_{\st(k)}$, where $\so:K\to U(1)$ and $\st:K\to K$ are some functions.
Suppose $k=(x,\chi)$ and $l=(y,\lambda)$.
Let $c(k,l)=\chi(y)$.
The condition that $\sigma$ is a group homomorphism puts the following constraints on $\so$ and $\st$:
\begin{gather}
  \label{eq:1}
  \st(k+l)=\st(k)+\st(l),\\
  \label{eq:2}
  \so(k)\so(l)\so(k+l)\inv=c(k,l)c(\st(k),\st(l))\inv
\end{gather}
The identity (\ref{eq:1}) implies that $\st$ is an automorphism of $K$.
The left side of (\ref{eq:2}) is symmetric.
The resulting symmetry of the right side implies that
\begin{equation*}
  e(k,l)=e(\alpha(k),\alpha(l)),
\end{equation*}
where $e(k,l)=c(k,l)c(l,k)\inv$.
Let $B_0(G)$ denote the group of automorphisms of $G$ which fix its centre.
Denoting by $\Aut(K,e)$ the group of automorphisms of $K$ which preserve $e$, we see that $\sigma\mapsto \st$ is a homomorphism $B_0(G)\to \Aut(K,e)$ with kernel $\hat K$.
In particular, $B_0(G)$ is a finite group.
\section{The Weil representation}
\label{sec:Weil-rep}
For each $\sigma\in B_0(G)$, $g\mapsto \sigma(g)$ is again a unitary representation of $G$ on $L^2(A)$ such that $\rho^\sigma(j(z))=zI$.
Therefore, by Theorem~\ref{theorem:SvNM}, there exists a unitary operator $W(\sigma)$ on $L^2(A)$ such that
\begin{equation}
  \label{eq:3}
  W(\sigma)g = \sigma(g)W(\sigma).
\end{equation}
This operator is unique up to multiplication by a unitary scalar.
\section{Absolute value of the character}
\label{sec:absolute}
An ordinary representation $\kappa$ of $B_0(G)$ on $\End_\C(L^2(A))$ is obtained by
\begin{equation*}
  \kappa(\sigma)(\phi)=W(\sigma)\phi\adjoint{W(\sigma)} \text{ for each } \sigma\in B_0(G).
\end{equation*}
The intertwining property (\ref{eq:3}) of $W(\sigma)$ implies that
\begin{equation}
  \label{eq:8}
  \kappa(\sigma)(W_k)=\sigma(W_k)=\so(k)W_{\st(k)} \text{ for each } k\in K.
\end{equation}
Using the basis from Lemma~\ref{lemma:basis} to compute trace gives
\begin{prop}
  \label{prop:trace-kappa}
  For every $\sigma\in B_0(G)$,
  \begin{equation*}
    \tr(\kappa(\sigma))=\sum_{\st(k)=k}\so(k).  
  \end{equation*}
\end{prop}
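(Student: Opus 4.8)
The plan is to compute $\tr(\kappa(\sigma))$ by expressing everything in terms of the basis $\{W_k : k \in K\}$ of $\End_\C(L^2(A))$ provided by Lemma~\ref{lemma:basis}. First I would recall from \eqref{eq:8} that $\kappa(\sigma)(W_k) = \so(k) W_{\st(k)}$; since $\st$ is an automorphism of $K$, the assignment $k \mapsto \st(k)$ permutes the basis elements (up to the scalars $\so(k)$). So the matrix of $\kappa(\sigma)$ with respect to this basis is a monomial matrix: in the column indexed by $k$, the only nonzero entry is $\so(k)$, sitting in the row indexed by $\st(k)$.

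Next I would extract the trace of a monomial matrix in the standard way: the $(k,k)$ diagonal entry of the matrix of $\kappa(\sigma)$ is nonzero precisely when $\st(k) = k$, in which case it equals $\so(k)$. Summing the diagonal entries gives
\begin{equation*}
  \tr(\kappa(\sigma)) = \sum_{\st(k) = k} \so(k),
\end{equation*}
which is exactly the claimed formula.

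The one point that deserves a word of care — and the only place where anything could go wrong — is the legitimacy of computing the trace of the operator $\kappa(\sigma)$ on $\End_\C(L^2(A))$ using the (non-orthonormal, but still perfectly good) basis $\{W_k\}$: trace is basis-independent for a linear operator on a finite-dimensional vector space, and $\End_\C(L^2(A))$ is finite-dimensional since $A$ is finite, so this is immediate. I would also note in passing that $\kappa$ is genuinely a well-defined representation of $B_0(G)$ even though each $W(\sigma)$ is only determined up to a scalar, because conjugation $\phi \mapsto W(\sigma)\phi\adjoint{W(\sigma)}$ is insensitive to rescaling $W(\sigma)$ by an element of $U(1)$; hence $\tr(\kappa(\sigma))$ is an honest, well-defined number. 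No real obstacle is expected here — this is a direct unwinding of \eqref{eq:8} against Lemma~\ref{lemma:basis}.
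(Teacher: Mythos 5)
Your proposal is correct and follows exactly the paper's (very terse) argument: apply \eqref{eq:8} to see that $\kappa(\sigma)$ acts monomially on the basis $\{W_k\}$ of Lemma~\ref{lemma:basis}, so only the fixed points of $\st$ contribute to the trace, each with weight $\so(k)$. Your added remarks on basis-independence of the trace and on $\kappa$ being well defined despite the scalar ambiguity in $W(\sigma)$ are sound and consistent with the paper.
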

For any operator $T$ on a finite dimensional Hilbert space $\Hilb$, the trace of the operator $\phi\mapsto T\phi \adjoint T$ on $\End_\C\Hilb$ is $|\tr T|^2$.
Thus an immediate corollary of Proposition~\ref{prop:trace-kappa} is
\begin{theorem}
  \label{theorem:abs-trace}
  For every $\sigma\in B_0(G)$,
  \begin{equation*}
    |\tr W(\sigma)|^2 = \sum_{\st(k)=k} \so(k).
  \end{equation*}
\end{theorem}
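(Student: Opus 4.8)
The plan is to read the theorem off from Proposition~\ref{prop:trace-kappa} together with a single elementary fact of linear algebra: for any operator $T$ on a finite dimensional Hilbert space $\Hilb$, the endomorphism $C_T\colon\phi\mapsto T\phi\adjoint{T}$ of $\End_\C\Hilb$ has trace $|\tr T|^2$. First I would prove this fact. The slick way is to identify $\End_\C\Hilb$ with $\Hilb\otimes\overline{\Hilb}$, under which $C_T$ becomes $T\otimes\overline T$, so $\tr C_T=\tr(T)\tr(\overline T)=\tr(T)\overline{\tr T}=|\tr T|^2$. Just as fast is a direct computation: fix an orthonormal basis $e_1,\dots,e_n$ of $\Hilb$ with associated matrix units $E_{ij}\in\End_\C\Hilb$; then $TE_{ij}\adjoint T=\sum_{p,q}T_{pi}\,\overline{T_{qj}}\,E_{pq}$, whose $E_{ij}$\dash component is $T_{ii}\,\overline{T_{jj}}$, and summing over $i$ and $j$ gives $\bigl(\sum_i T_{ii}\bigr)\bigl(\sum_j\overline{T_{jj}}\bigr)=|\tr T|^2$.

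Next I would apply this with $T=W(\sigma)$. By the definition of $\kappa$ in Section~\ref{sec:absolute}, the operator $\kappa(\sigma)$ on $\End_\C(L^2(A))$ is exactly $C_{W(\sigma)}$, so the fact above gives $\tr\kappa(\sigma)=|\tr W(\sigma)|^2$. On the other hand, Proposition~\ref{prop:trace-kappa} evaluates the same trace using the Weyl basis of Lemma~\ref{lemma:basis} and the eigenrelation (\ref{eq:8}), giving $\tr\kappa(\sigma)=\sum_{\st(k)=k}\so(k)$. Equating the two expressions is the assertion of the theorem. As a consistency check, recall that $W(\sigma)$ is determined only up to a unitary scalar $z$: replacing $W(\sigma)$ by $zW(\sigma)$ leaves $C_{W(\sigma)}$ unchanged and multiplies $\tr W(\sigma)$ by $z$, so both sides of the claimed identity are unaffected, as they must be.

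There is no real obstacle here. The substantive input is Proposition~\ref{prop:trace-kappa}, which is already in hand (it rests only on Lemma~\ref{lemma:basis} and the intertwining property (\ref{eq:3})), and the remaining ingredient is the one-line trace computation displayed above. The only point worth a passing remark is that $\kappa$ is a genuine ordinary representation of $B_0(G)$ — so that ``$\tr\kappa(\sigma)$'' is a well-defined class function — but this is immediate, since the only ambiguity in $\sigma\mapsto W(\sigma)$ consists of central unitary scalars, which act trivially under conjugation $\phi\mapsto W(\sigma)\phi\adjoint{W(\sigma)}$.
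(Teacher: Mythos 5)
Your proposal is correct and follows exactly the paper's route: combine Proposition~\ref{prop:trace-kappa} with the fact that $\phi\mapsto T\phi\adjoint{T}$ has trace $|\tr T|^2$, which the paper states without proof and you supply. The consistency check about the scalar ambiguity in $W(\sigma)$ is a nice touch but not needed.
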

\section{Decomposition of the Weil representation}
\label{sec:decomposition}
The uniqueness of $W(\sigma)$ up to scaling implies that, for all $\sigma,\tau\in B_0(G)$, there exists $c(\sigma,\tau)\in U(1)$ such that
\begin{equation}
  \label{eq:4}
  W(\sigma)W(\tau)=c(\sigma,\tau)W(\sigma\tau),
\end{equation}
i.e., $W$ is a projective representation.
For any subgroup $S$ of $B_0(G)$ the twisted group ring $\C[S]_c$ has the same underlying vector space as $\C[S]$, which is spanned by elements $1_s$, $s\in S$, but has multiplication law
\begin{equation*}
  1_\sigma 1_\tau=c(\sigma,\tau)1_{\sigma\tau}.
\end{equation*}
The projective representations of $S$ with cocycle $c$, which satisfy (\ref{eq:4}) are identified with $\C[S]_c$\dash modules in the usual way; $1_\sigma$ inherits the action of $\sigma$.
$\C[S]_c$ is semisimple, and therefore, $L^2(A)$ has a decomposition
\begin{equation*}
  L^2(A)=\sum_\pi m_\pi V_\pi
\end{equation*}
where $\pi$ ranges over the isomorphism classes of simple $\C[S]_c$\dash modules.
By Schur's lemma,
\begin{equation*}
  \End_{\C[S]_c} L^2(A) = \bigoplus_\pi M_{m_\pi}(\C).
\end{equation*}
Therefore, $\sum m_\pi^2=\dim \End_{\C[S]_c}(L^2(A))$.
$\End_{\C[S]_c}(L^2(A))$ consists of those elements $\phi \in \End_\C(L^2(A))$ which commute with $W(\sigma)$ for each $\sigma\in S$.
Suppose $\phi=\sum_{k\in K} \phi_k W_k$.
The condition that $\phi$ commutes with $W(\sigma)$ becomes
\begin{equation*}
  \phi_{\st(k)}=\so(k)\phi_k \text{ for all } k\in K.
\end{equation*}
The group $S$ acts on $K$ by $\sigma\cdot k=\st(k)$ for each $\sigma\in S$ and $k\in K$.
The above equation says that $\phi_k$ determines $\phi_l$ for all $l$ in the $S$\dash orbit of $k$.
Moreover, if there exists $\sigma\in S$ such that $\sigma\cdot k=k$ but $\so(k)\neq 1$ then $\phi_k=0$.
We have proved the following two theorems:
\begin{theorem}
  \label{theorem:centre}
  Let $S$ be a subgroup of $B_0(G)$.
  Then $\End_{\C[S]_c}(L^2(A))$ is spanned by operators of the form $\phi=\sum_{k\in K}\phi_k W_k$ where 
  \begin{equation*}
    \phi_{\st(k)}=\so(k)\phi_k \text{ for all } k\in K,\;\sigma\in S.
  \end{equation*}
\end{theorem}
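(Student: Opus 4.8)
The plan is to identify $\End_{\C[S]_c}(L^2(A))$ explicitly as the commutant of $\{W(\sigma):\sigma\in S\}$ in $\End_\C(L^2(A))$ and then simply read off the defining equations in the Weyl basis. First I would note that, because each $W(\sigma)$ is unitary, and hence invertible with inverse $\adjoint{W(\sigma)}$, an operator $\phi\in\End_\C(L^2(A))$ commutes with $W(\sigma)$ if and only if $\kappa(\sigma)(\phi)=W(\sigma)\phi\adjoint{W(\sigma)}=\phi$. Thus $\End_{\C[S]_c}(L^2(A))$ is exactly the subspace of $\kappa(S)$\dash fixed vectors in $\End_\C(L^2(A))$.

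Next I would expand $\phi$ in the basis of Lemma~\ref{lemma:basis}, writing $\phi=\sum_{k\in K}\phi_k W_k$ with the $\phi_k$ uniquely determined. Applying $\kappa(\sigma)$ term by term and using (\ref{eq:8}),
\begin{equation*}
  \kappa(\sigma)(\phi)=\sum_{k\in K}\phi_k\,\kappa(\sigma)(W_k)=\sum_{k\in K}\phi_k\so(k)W_{\st(k)}=\sum_{l\in K}\so(\st\inv(l))\phi_{\st\inv(l)}W_l,
\end{equation*}
where the last step reindexes the sum by $l=\st(k)$; this is a bijection of $K$ because (\ref{eq:1}) makes $\st$ an automorphism of $K$. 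Comparing coefficients of $W_l$ with those of $\phi$, again using the uniqueness of the expansion in Lemma~\ref{lemma:basis}, the condition $\kappa(\sigma)(\phi)=\phi$ is equivalent to $\phi_l=\so(\st\inv(l))\phi_{\st\inv(l)}$ for all $l\in K$, i.e.\ to $\phi_{\st(k)}=\so(k)\phi_k$ for all $k\in K$. Imposing this for every $\sigma\in S$ yields precisely the stated homogeneous linear system, and since the solution set of such a system is a linear subspace, the corresponding operators $\phi$ span $\End_{\C[S]_c}(L^2(A))$.

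There is no serious obstacle here: the argument is entirely a matter of bookkeeping once Lemma~\ref{lemma:basis} and the formula (\ref{eq:8}) are in hand. The only points that warrant a moment's attention are the reduction from ``commutes with $W(\sigma)$'' to ``is fixed by $\kappa(\sigma)$'' (for which one uses that $W(\sigma)$ is unitary) and the reindexing step, where one must remember that $\st\in\Aut(K)$ so that the substitution $l=\st(k)$ neither loses nor repeats a basis element.
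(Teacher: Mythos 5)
Your proposal is correct and follows essentially the same route as the paper: identify $\End_{\C[S]_c}(L^2(A))$ with the commutant of the operators $W(\sigma)$, expand $\phi$ in the Weyl basis of Lemma~\ref{lemma:basis}, and use (\ref{eq:8}) to translate $\kappa(\sigma)(\phi)=\phi$ into the coefficient condition $\phi_{\st(k)}=\so(k)\phi_k$. The only difference is that you spell out the reindexing and coefficient comparison that the paper leaves implicit.
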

\begin{theorem}
  \label{theorem:decomposition}
  Let $S$ be any subgroup of $B_0(G)$.
  Suppose that, as a $\C[S]_c$\dash module $L^2(A)$ has a decomposition
  \begin{equation*}
    L^2(A) = \sum_\pi m_\pi V_\pi.
  \end{equation*}
  Then $\sum m_\pi^2$ is the number of $S$\dash orbits in $K$ for which, whenever $\sigma\cdot k=k$, $\so(k)=1$.
\end{theorem}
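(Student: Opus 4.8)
The plan is to compute $\dim \End_{\C[S]_c}(L^2(A))$ directly, since the discussion preceding the statement already records that $\sum_\pi m_\pi^2$ equals this dimension (by Schur's lemma and semisimplicity of $\C[S]_c$), and already identifies $\End_{\C[S]_c}(L^2(A))$ with the space of operators $\phi=\sum_{k\in K}\phi_k W_k$ whose coefficients satisfy the linear system $\phi_{\st(k)}=\so(k)\phi_k$ for all $k\in K$ and all $\sigma\in S$ (this is Theorem~\ref{theorem:centre}). So the whole task is to count the solutions of that system.

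First I would use the fact that $\st(k)$ always lies in the same $S$\dash orbit as $k$: no constraint relates coefficients attached to indices in different orbits. Hence the solution space splits as a direct sum, over the $S$\dash orbits $\mathcal O\subseteq K$, of subspaces $V_{\mathcal O}$ cut out by the constraints with indices in $\mathcal O$, and it suffices to show $\dim V_{\mathcal O}=1$ when $\mathcal O$ is \emph{admissible} --- meaning $\so(k)=1$ whenever $k\in\mathcal O$ and $\sigma\in S$ fixes $k$ --- and $\dim V_{\mathcal O}=0$ otherwise. Fix a representative $k_0\in\mathcal O$ with stabiliser $S_{k_0}=\{\sigma\in S:\st(k_0)=k_0\}$. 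For any $l\in\mathcal O$, picking $\sigma$ with $\st(k_0)=l$ and applying the constraint at $k_0$ gives $\phi_l=\so(k_0)\phi_{k_0}$, so $\phi_{k_0}$ determines every $\phi_l$ and thus $\dim V_{\mathcal O}\le 1$. Moreover, for $\sigma\in S_{k_0}$ the constraint at $k_0$ reads $\phi_{k_0}=\so(k_0)\phi_{k_0}$; if $\mathcal O$ is not admissible this forces $\phi_{k_0}=0$, hence $V_{\mathcal O}=0$.

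For the converse I would exhibit a nonzero element of $V_{\mathcal O}$ when $\mathcal O$ is admissible, by setting $\phi_{k_0}=1$ and $\phi_l=\so(k_0)$ for any $\sigma\in S$ with $\st(k_0)=l$. The point requiring care is that this is well defined and does satisfy every constraint. The basic tool is the cocycle relation $(\sigma\tau)_1(k)=\tau_1(k)\,\so(\tau_2(k))$, $(\sigma\tau)_2=\st\tau_2$, which comes from $\sigma$ being a homomorphism of $G$, together with its consequence $(\sigma\inv)_1(k)=\so(\st\inv(k))\inv$. Well\dash definedness: if $\st(k_0)=\tau_2(k_0)$ then $\tau\inv\sigma\in S_{k_0}$, so admissibility gives $(\tau\inv\sigma)_1(k_0)=1$, which unwinds via these relations to $\so(k_0)\tau_1(k_0)\inv=1$, i.e.\ $\so(k_0)=\tau_1(k_0)$; applying the same relations once more checks that $\phi_{\tau_2(l)}=\tau_1(l)\phi_l$ holds for all $l\in\mathcal O$, $\tau\in S$. (The same kind of conjugation computation shows that admissibility at $k_0$ implies $\so(k)=1$ for every $\sigma$ fixing every $k\in\mathcal O$, so the notion is genuinely a property of the orbit, matching the phrasing of the theorem.) Then $V_{\mathcal O}\neq 0$, so $\dim V_{\mathcal O}=1$.

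Summing over orbits yields $\dim\End_{\C[S]_c}(L^2(A))=\#\{\text{admissible }S\text{-orbits in }K\}$, and combined with $\sum_\pi m_\pi^2=\dim\End_{\C[S]_c}(L^2(A))$ this is exactly the assertion. I expect the main obstacle to be the well\dash definedness step: the bookkeeping with the twisted cocycle identity for $\sigma\mapsto\so$ and with inverses in $S$, needed to see that the coefficient system attached to an admissible orbit is consistent. Everything else is orbit counting and elementary linear algebra.
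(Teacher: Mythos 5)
Your proposal is correct and follows essentially the same route as the paper: identify $\sum_\pi m_\pi^2$ with $\dim\End_{\C[S]_c}(L^2(A))$ via semisimplicity and Schur's lemma, expand a commuting operator in the basis $\{W_k\}$, and count the $S$\dash orbits on which the resulting linear system $\phi_{\st(k)}=\so(k)\phi_k$ admits a nonzero solution. The only difference is that you explicitly verify, via the cocycle identity $(\sigma\tau)_1(k)=\tau_1(k)\so(\tau_2(k))$, that the coefficient assignment on an admissible orbit is consistent (so each such orbit contributes dimension exactly one); the paper leaves this well\dash definedness check implicit, and your filling it in is a genuine, if minor, improvement in rigour.
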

\section{Simplifications when $|A|$ is odd}
\label{sec:odd}
Throughout this section we assume that $|A|$ is odd.
When $|A|$ is odd, multiplication by $2$ is an automorphism of $|A|$.
We denote its inverse by $x\mapsto x/2$.
For $k=(x,\chi)$ define normalised Weyl operators
\begin{equation*}
  \W_k=\chi(x/2)W_k.
\end{equation*}
From (\ref{eq:5}), we deduce
\begin{equation*}
  \W_k\W_l=\chi(y/2)\lambda(x/2)\inv \W_{k+l}.
\end{equation*}
Let $\tilde c(k,l)=\chi(y/2)\lambda(x/2)\inv$.
Note that $\tilde c(2k,2l)=e(k,l)^2$ for all $k,l\in K$.
Therefore, for any automorphism $\alpha$ of $K$, $\tilde c(\alpha(k),\alpha(l))=\tilde c(k,l)$ if and only if $e(\alpha(k),\alpha(l))=e(k,l)$.

Working as in Section~\ref{sec:autos}, we see that every $\sigma\in B_0(G)$ has the form
\begin{equation*}
  \sigma(\W_k)=\tilde\so(k)\W_{\st(k)}, \text{ where } \tilde\so\in \hat K\text{ and } \st\in \Aut(K,e).
\end{equation*}
In particular, $\sigma(\alpha):\W_k\mapsto \W_{\alpha(k)}$ is in $B_0(G)$ for every $\alpha\in \Aut(K,e)$.
In what follows, we will think of $\Aut(K,e)$ as a subgroup of $B_0(G)$ via the section $\alpha\mapsto \sigma(\alpha)$.
In terms of $\W_k$, (\ref{eq:8}) becomes
\begin{equation*}
  \kappa(\sigma(\alpha))\W_k = \W_{\alpha(k)} \text{ for all } \alpha\in \Aut(K,e),\;k\in K.
\end{equation*}
Therefore Theorem~\ref{theorem:abs-trace} takes the form
\begin{theorem}
  \label{theorem:absolute-char-odd}
  For $\alpha\in \Aut(K,e)$,
  \begin{equation*}
    |\tr W(\sigma(\alpha))|^2=|K^\alpha|.
  \end{equation*}
\end{theorem}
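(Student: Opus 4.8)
The plan is to obtain Theorem~\ref{theorem:absolute-char-odd} as a direct specialisation of Theorem~\ref{theorem:abs-trace}. Recall that Theorem~\ref{theorem:abs-trace} gives, for every $\sigma\in B_0(G)$,
\begin{equation*}
  |\tr W(\sigma)|^2=\sum_{\st(k)=k}\so(k),
\end{equation*}
where $\sigma(W_k)=\so(k)W_{\st(k)}$. The key observation is that in the setting of this section we have a \emph{canonical} splitting of the projection $B_0(G)\to\Aut(K,e)$, via $\alpha\mapsto\sigma(\alpha)$, characterised by the clean formula $\sigma(\alpha)(\W_k)=\W_{\alpha(k)}$ in terms of the normalised Weyl operators.

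The first step is to rewrite the right-hand side of Theorem~\ref{theorem:abs-trace} in terms of the normalised operators $\W_k=\chi(x/2)W_k$. Since $\W_k$ is just a unitary scalar multiple of $W_k$, the family $\{\W_k:k\in K\}$ is also a basis of $\End_\C(L^2(A))$ (Lemma~\ref{lemma:basis}), and Proposition~\ref{prop:trace-kappa} holds verbatim with $W_k$ replaced by $\W_k$ and $\so$ replaced by the corresponding cocycle-twisted multiplier. Applying this to $\sigma=\sigma(\alpha)$ and using $\kappa(\sigma(\alpha))\W_k=\W_{\alpha(k)}$, the multiplier attached to $\sigma(\alpha)$ is identically $1$: the coefficient of $\W_k$ in $\sigma(\alpha)(\W_k)$ is $1$ precisely when $\alpha(k)=k$, and the eigenvalue argument from the proof of Lemma~\ref{lemma:basis} shows the trace of $\kappa(\sigma(\alpha))$ picks out exactly those $k$.

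The second step is therefore just to count: $\tr\kappa(\sigma(\alpha))=\#\{k\in K:\alpha(k)=k\}=|K^\alpha|$. Combining this with the elementary fact (already recorded before Theorem~\ref{theorem:abs-trace}) that $\tr\kappa(\sigma)=|\tr W(\sigma)|^2$ for any $\sigma$, we conclude $|\tr W(\sigma(\alpha))|^2=|K^\alpha|$, as claimed. Note that the passage from $W_k$ to $\W_k$ does not change $W(\sigma)$ itself, only the bookkeeping, so the left-hand side is unaffected.

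The only point that needs a little care — and the place I expect a referee to look hardest — is the claim that $\sigma(\alpha)(\W_k)=\W_{\alpha(k)}$ with \emph{no} scalar, i.e.\ that the section $\alpha\mapsto\sigma(\alpha)$ is well-defined as stated. This rests on the identity $\tilde c(\alpha(k),\alpha(l))=\tilde c(k,l)$ for $\alpha\in\Aut(K,e)$, which in turn follows from $\tilde c(2k,2l)=e(k,l)^2$ together with the fact that squaring is a bijection on the values of $e$ (since $|K|$ is odd); this is exactly the computation carried out in the paragraph preceding the theorem. Granting that, the map $\W_k\mapsto\W_{\alpha(k)}$ really does extend to an automorphism of $G$ fixing the centre, and the rest of the argument is immediate. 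So the substantive content is entirely in the odd-order normalisation already established, and the theorem itself is a one-line corollary.
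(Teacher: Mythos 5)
Your proposal is correct and follows essentially the same route as the paper: it specialises Theorem~\ref{theorem:abs-trace} (equivalently Proposition~\ref{prop:trace-kappa}) by passing to the normalised operators $\W_k$, using the invariance $\tilde c(\alpha(k),\alpha(l))=\tilde c(k,l)$ to justify the section $\alpha\mapsto\sigma(\alpha)$ with trivial multiplier, so the character sum collapses to the count $|K^\alpha|$. The point you flag for extra care is exactly the one the paper settles in the paragraph preceding the theorem, so nothing further is needed.
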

Here $K^\alpha$ is the subgroup of $K$ consisting of elements that are fixed by $\alpha$.
Theorem~\ref{theorem:centre} simplifies to
\begin{theorem}
  \label{theorem:odd-centre}
  Let $S$ be any subgroup of $\Aut(K,e)$.
  Then $\End_{\C[S]_c}(L^2(A))$ consists of those operators $\phi=\sum_{k\in K} \phi_k \W_k$ for which the functions $k\mapsto \phi_k$ are constant on the $S$\dash orbits in $K$.
\end{theorem}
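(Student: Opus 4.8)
The plan is to rerun the argument behind Theorem~\ref{theorem:centre}, but now in the normalised coordinates $\W_k$, in which the twisting functions attached to elements of $\Aut(K,e)$ are trivial.

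First I would note that $\{\W_k\;:\;k\in K\}$ is again a basis of $\End_\C(L^2(A))$: each $\W_k$ is the nonzero scalar $\chi(x/2)$ times $W_k$, so this is immediate from Lemma~\ref{lemma:basis}. Hence every $\phi\in\End_\C(L^2(A))$ has a unique expansion $\phi=\sum_{k\in K}\phi_k\W_k$, and the set described in the statement is visibly a linear subspace.

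Next I would recall that $\End_{\C[S]_c}(L^2(A))$ is precisely the set of $\phi$ commuting with $W(\sigma)$ for every $\sigma\in S$. Since each $W(\sigma)$ is unitary, $\phi$ commutes with $W(\sigma)$ if and only if $\kappa(\sigma)(\phi)=W(\sigma)\phi\adjoint{W(\sigma)}=\phi$; so $\End_{\C[S]_c}(L^2(A))$ is the space of $\kappa(S)$\dash fixed vectors in $\End_\C(L^2(A))$. Now, viewing $S$ as a subgroup of $B_0(G)$ via the section $\alpha\mapsto\sigma(\alpha)$ of Section~\ref{sec:odd}, we have $\kappa(\sigma(\alpha))\W_k=\W_{\alpha(k)}$, hence
\[
\kappa(\sigma(\alpha))(\phi)=\sum_{k\in K}\phi_k\W_{\alpha(k)}=\sum_{k\in K}\phi_{\alpha\inv(k)}\W_k.
\]
Comparing coefficients against the basis $\{\W_k\}$ shows that $\kappa(\sigma(\alpha))(\phi)=\phi$ is equivalent to $\phi_{\alpha\inv(k)}=\phi_k$ for all $k\in K$. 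Imposing this for every $\alpha\in S$ says exactly that $k\mapsto\phi_k$ is constant on each $S$\dash orbit in $K$, which is the assertion.

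There is no substantial obstacle here: the real work has already been done in Section~\ref{sec:odd}, where the identity $\tilde c(\alpha(k),\alpha(l))=\tilde c(k,l)$ for $\alpha\in\Aut(K,e)$ lets one realise $\Aut(K,e)$ inside $B_0(G)$ acting on the normalised Weyl operators by an honest permutation, with no twisting scalar. The only points that need care are the bookkeeping with the section $\alpha\mapsto\sigma(\alpha)$ and the elementary observation that, because $W(\sigma)$ is unitary, commuting with $W(\sigma)$ is the same as being fixed by $\kappa(\sigma)$ — both are routine.
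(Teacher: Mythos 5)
Your proof is correct and follows essentially the paper's own route: the paper obtains this statement by specialising the commutant computation behind Theorem~\ref{theorem:centre} to the normalised operators $\W_k$, where $\kappa(\sigma(\alpha))\W_k=\W_{\alpha(k)}$ makes the twisting character trivial, exactly as you do. Your explicit coefficient comparison and the remark that $\{\W_k\}$ is still a basis are just the details the paper leaves implicit.
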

The delta function at the origin and the constant functions are obvious examples of functions that are constant on the $S$\dash orbits for any $S\subset \Aut(K,e)$.
They allow us to find two invariant subspaces.
\begin{lemma}
  \label{lemma:even-odd}
  Let $S$ be any subgroup of $\Aut(K,e)$.
  Let $\delta_0=\W_0$ and $\delta_K=\sum_{k\in K}\W_k$.
  Define
  \begin{equation*}
    \epsilon_\pm=(\delta_0\pm \delta_K/\sqrt{|K|})/2.
  \end{equation*}
  Then $\epsilon_\pm$ are orthogonal central idempotents in $\End_{\C[S]_c}(L^2(A))$ such that $\epsilon_++\epsilon_-=1$.
  As projection operators on $L^2(A)$, $\epsilon_+$ and $\epsilon_-$ are the orthogonal projections onto the subspaces of $L^2(A)$ consisting of the even functions and the odd functions respectively.
\end{lemma}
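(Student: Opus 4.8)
The plan is to recognise $\delta_K/\sqrt{|K|}$ as the parity operator on $L^2(A)$ and then argue by elementary manipulation of an involution. First I would compute how $\delta_K$ acts on a function. From the definitions, for $k=(x,\chi)$ one has $(\W_k f)(u)=\chi(u-x/2)f(u-x)$. Summing over $k\in K=A\times\hat A$ and using the orthogonality relation $\sum_{\chi\in\hat A}\chi(a)$, which equals $|A|$ for $a=0$ and $0$ otherwise --- together with the invertibility of $2$ on $A$, so that $u-x/2=0$ forces $x=2u$ --- one gets $(\delta_K f)(u)=|A|\,f(-u)$. Since $|K|=|A|^2$, the operator $P:=\delta_K/\sqrt{|K|}$ is the reflection $(Pf)(u)=f(-u)$; also $\delta_0=\W_0=\mathrm{id}$. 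Hence $\epsilon_\pm=(\mathrm{id}\pm P)/2$. This identification is the only genuinely computational input; the rest is formal.

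From $P^2=\mathrm{id}$ a one-line expansion gives $\epsilon_\pm^2=\epsilon_\pm$, $\epsilon_+\epsilon_-=\epsilon_-\epsilon_+=(\mathrm{id}-P^2)/4=0$, and $\epsilon_++\epsilon_-=\mathrm{id}$. As operators on $L^2(A)$, $\epsilon_+$ and $\epsilon_-$ are the spectral projections of the involution $P$ onto its $(+1)$\dash and $(-1)$\dash eigenspaces, which are precisely the even and the odd functions on $A$ (every $f$ is $(f+Pf)/2+(f-Pf)/2$, and a function that is both even and odd vanishes). The two projections are orthogonal because $P$ is unitary and self-adjoint --- equivalently, because even and odd functions are orthogonal in $L^2(A)$, as one sees by re-indexing the defining sum of the inner product by $u\mapsto -u$.

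To see that $\epsilon_\pm$ lie in $\End_{\C[S]_c}(L^2(A))$ I would invoke Theorem~\ref{theorem:odd-centre}: an operator $\phi=\sum_k\phi_k\W_k$ belongs to this commutant exactly when $k\mapsto\phi_k$ is constant on the $S$\dash orbits in $K$. The coefficient function of $\delta_0$ is the delta function at $0\in K$ and that of $\delta_K$ is the constant function $1$, and both are constant on $S$\dash orbits for every $S\subseteq\Aut(K,e)$; hence $\delta_0,\delta_K\in\End_{\C[S]_c}(L^2(A))$, and so do the combinations $\epsilon_\pm$.

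The step I expect to require the most care is the claim that $\epsilon_\pm$ are \emph{central} in $\End_{\C[S]_c}(L^2(A))$ rather than merely elements of it. The route I would take is to note that $P=\delta_K/\sqrt{|K|}$ agrees, up to a unimodular scalar, with the Weil operator $W(\sigma(-\mathrm{id}))$ attached to the automorphism $-\mathrm{id}\colon k\mapsto -k$ of $K$: indeed $W(\sigma(-\mathrm{id}))\,\W_k\,\adjoint{W(\sigma(-\mathrm{id}))}=\W_{-k}=P\W_k P\inv$ for every $k$, and since the $\W_k$ span $\End_\C(L^2(A))$ the operator $P\inv W(\sigma(-\mathrm{id}))$ must be a scalar. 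As $-\mathrm{id}$ is central in $\Aut(K,e)$, if $-\mathrm{id}\in S$ then $P$ lies in the algebra generated by the operators $W(\sigma(\alpha))$, $\alpha\in S$, whose intersection with $\End_{\C[S]_c}(L^2(A))$ is exactly the centre of that commutant; this yields centrality, and I would want to check the case $-\mathrm{id}\notin S$ separately. In any event the decomposition $L^2(A)=\epsilon_+L^2(A)\oplus\epsilon_-L^2(A)$ into $\C[S]_c$\dash submodules --- the use to which the lemma is put --- is immediate from the previous paragraph.
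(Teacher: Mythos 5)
Your computational core is the same as the paper's: you evaluate $(\delta_K f)(u)=\sum_{x\in A}\sum_{\chi\in\hat A}\chi(u-x/2)f(u-x)$, use orthogonality of characters to force $x=2u$, and identify $\delta_K/\sqrt{|K|}$ with the parity operator $P\colon f(u)\mapsto f(-u)$, so that $\epsilon_\pm$ are the even/odd projections. The formal verifications (idempotency, $\epsilon_+\epsilon_-=0$, $\epsilon_++\epsilon_-=1$) and the membership of $\epsilon_\pm$ in $\End_{\C[S]_c}(L^2(A))$ via the orbit-constancy criterion of Theorem~\ref{theorem:odd-centre} are exactly what the paper intends; its written proof in fact stops once $\epsilon_\pm$ are identified as the even/odd projections.

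The one point where you hesitate --- centrality --- is also the one point the paper's proof is silent on, and your unease about the case $-\mathrm{id}\notin S$ is justified: that case cannot be closed, because the centrality assertion is false there in general. Conjugation by $P$ sends $\sum_k\phi_k\W_k$ to $\sum_k\phi_{-k}\W_k$, so $P$ (equivalently $\epsilon_\pm$) is central in $\End_{\C[S]_c}(L^2(A))$ precisely when every $S$\dash orbit in $K$ is stable under $k\mapsto -k$. For $S=\{1\}$ the commutant is all of $\End_\C(L^2(A))$, whose only central idempotents are $0$ and $1$; less degenerately, for $A=\Z/p\Z$ and $S$ the group of upper unitriangular matrices in $SL_2(\Z/p\Z)$, the point $(1,0)$ is fixed by $S$, so $\W_{(1,0)}$ lies in the commutant while $P\W_{(1,0)}P\inv=\W_{(-1,0)}\neq\W_{(1,0)}$. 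Your argument when $-\mathrm{id}\in S$ is correct: $P$ implements $\W_k\mapsto\W_{-k}$, hence agrees with $W(\sigma(-\mathrm{id}))$ up to a unimodular scalar by Stone--von Neumann uniqueness, so it lies in the intersection of the algebra generated by the $W(\sigma(\alpha))$, $\alpha\in S$, with its commutant, which (by the double commutant theorem in finite dimensions) is the centre of that commutant. Since $-\mathrm{id}\in\Aut(K,e)$, this covers the only case in which the lemma is subsequently used (Corollary~\ref{cor:prime-exponent}, with $S=\Aut(K,e)$). In short: everything except the word ``central'' is proved; centrality holds under the additional hypothesis that the $S$\dash orbits are symmetric under negation (e.g.\ $-\mathrm{id}\in S$); and without some such hypothesis the statement is false as written --- a defect of the lemma, not of your argument.
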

\begin{proof}
  Clearly, $\delta_0$ is the identity operator on $L^2(A)$.
  For any $f\in L^2(A)$,
  \begin{eqnarray*}
    (\delta_K f)(u) & = & \sum_{l\in K} (\W_l f)(u)\\
    & = & \sum_{x\in A}\sum_{\chi\in \hat A} \chi(u-x/2)f(u-x)
  \end{eqnarray*}
  For each $x\in A$, the inner sum vanishes unless $x=2u$, in which case it is $\sqrt{|K|}$, showing that
  \begin{equation*}
    (\delta_K f)(u) = \sqrt{|K|}f(-u).
  \end{equation*}
  It follows that
  \begin{equation*}
    (\epsilon_\pm f)(u) = (f(u)\pm f(-u))/2,
  \end{equation*}
  are the projections onto the subspaces of even and odd functions.
\end{proof}
The order of an element $k\in K$ is an invariant of its $\Aut(K,e)$\dash orbit.
Therefore, unless $A$ has prime exponent, there are more than two $\Aut(K,e)$\dash orbits.
If $A$ has prime exponent, then $\Aut(K,e)$ acts transitively on the non-zero elements of $K$.
Consequently
\begin{cor}
  \label{cor:prime-exponent}
  The Weil representation of $\Aut(K,e)$ is never irreducible; the subspaces of $L^2(A)$ consisting of the even and odd functions are always invariant.
  These subspaces are irreducible if and only if $A$ has prime exponent.
\end{cor}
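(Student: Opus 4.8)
The plan is to carry out the analysis of Section~\ref{sec:decomposition} with the normalised Weyl operators $\W_k$ in place of the $W_k$, taking $S=\Aut(K,e)$ regarded as a subgroup of $B_0(G)$ via the section $\alpha\mapsto\sigma(\alpha)$. Since $\kappa(\sigma(\alpha))\W_k=\W_{\alpha(k)}$, the multiplier attached to the $\W_k$ is trivial, so Theorem~\ref{theorem:odd-centre} shows that $\End_{\C[S]_c}(L^2(A))$ is spanned by the operators $\sum_k\phi_k\W_k$ for which $k\mapsto\phi_k$ is constant on the $\Aut(K,e)$\dash orbits in $K$. Hence $\sum_\pi m_\pi^2=\dim\End_{\C[S]_c}(L^2(A))$ is the number of $\Aut(K,e)$\dash orbits in $K$.

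First I would dispose of the claims valid for every non-trivial $A$. The orbits $\{0\}$ and $K\setminus\{0\}$ are distinct, so $\sum_\pi m_\pi^2\ge 2$ and the Weil representation is not irreducible; moreover Lemma~\ref{lemma:even-odd} supplies the orthogonal central idempotents $\epsilon_\pm$ of the commutant, non-zero because $|A|>1$, whose images are exactly the spaces of even and odd functions, so these are always invariant.

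For the equivalence I would show that each of ``$A$ has prime exponent'' and ``the spaces of even and odd functions are both irreducible'' is equivalent to ``$\Aut(K,e)$ has exactly two orbits on $K$''. The first equivalence is the observation made just before the statement: the order of $k\in K$ is an orbit invariant, so two orbits force $A$ to have prime exponent, and conversely prime exponent makes $\Aut(K,e)$ transitive on $K\setminus\{0\}$. For the second, if there are two orbits then $\sum_\pi m_\pi^2=2$, forcing $L^2(A)=V_1\oplus V_2$ with $V_1,V_2$ irreducible and non-isomorphic; the spaces of even and of odd functions are then two non-zero subrepresentations whose direct sum is $L^2(A)$, and a direct sum of two non-isomorphic irreducibles has no non-zero proper subrepresentation other than the two summands, so these two spaces are $V_1$ and $V_2$, hence irreducible. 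Conversely, if both are irreducible then, since the dimensions of the spaces of even and odd functions add up to the odd number $|A|$, they are unequal, so $V_{\mathrm{even}}\not\cong V_{\mathrm{odd}}$; thus $L^2(A)=V_{\mathrm{even}}\oplus V_{\mathrm{odd}}$ is a decomposition into non-isomorphic irreducibles, $\sum_\pi m_\pi^2=2$, and there are exactly two orbits.

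The only inputs needing real argument are the ``no diagonal subrepresentation'' fact, a routine use of Schur's lemma, and the transitivity of $\Aut(K,e)$ on $K\setminus\{0\}$ for $A$ of prime exponent: here $A\cong(\mathbf Z/p\mathbf Z)^g$ with $p$ odd, so $K\cong\mathbf F_p^{2g}$, $e$ corresponds to a non-degenerate symplectic form, $\Aut(K,e)\cong Sp_{2g}(\mathbf F_p)$, and symplectic groups are transitive on non-zero vectors. Neither is a serious obstacle; the one point of care is the degenerate case $A=0$, which must be tacitly excluded for the phrases ``never'' and ``prime exponent'' to make sense.
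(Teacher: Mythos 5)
Your proposal is correct and takes essentially the same route as the paper: invariance of the even and odd subspaces via Lemma~\ref{lemma:even-odd}, together with the identification of $\sum_\pi m_\pi^2$ with the number of $\Aut(K,e)$\dash orbits in $K$ (Theorem~\ref{theorem:odd-centre}) and the observation that there are exactly two orbits precisely when $A$ has prime exponent. The details you add (non-isomorphy of the even and odd subspaces from their unequal dimensions, and the Schur's-lemma analysis of subrepresentations of a sum of two non-isomorphic irreducibles) are exactly the steps the paper leaves implicit.
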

Theorem~\ref{theorem:decomposition} simplifies to
\begin{theorem}
  \label{theorem:decomposition-odd}
  Let $S$ be any subgroup of $\Aut(K,e)$.
  Then, with the $m_\pi$'s as in Theorem~\ref{theorem:decomposition}, $\sum m_\pi^2$ is the number of $S$\dash orbits in $K$.
\end{theorem}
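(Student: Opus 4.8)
The plan is to read off Theorem~\ref{theorem:decomposition-odd} as the special case of Theorem~\ref{theorem:decomposition} in which the twisting function is trivial, or, what amounts to the same thing, to combine the identity $\sum_\pi m_\pi^2=\dim\End_{\C[S]_c}(L^2(A))$ established in Section~\ref{sec:decomposition} with the concrete description of $\End_{\C[S]_c}(L^2(A))$ supplied by Theorem~\ref{theorem:odd-centre}.

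First I would note that, since $\W_k=\chi(x/2)W_k$ with $\chi(x/2)\in U(1)$ a nonzero scalar, Lemma~\ref{lemma:basis} implies that $\{\W_k\;:\;k\in K\}$ is again a basis of $\End_\C(L^2(A))$. Hence every $\phi\in\End_\C(L^2(A))$ is uniquely of the form $\phi=\sum_{k\in K}\phi_k\W_k$, and by Theorem~\ref{theorem:odd-centre} such a $\phi$ lies in $\End_{\C[S]_c}(L^2(A))$ precisely when $k\mapsto\phi_k$ is constant on each $S$\dash orbit in $K$. Consequently the ``orbit sums'' $\sum_{k\in\mathcal O}\W_k$, indexed by the $S$\dash orbits $\mathcal O\subseteq K$, form a basis of $\End_{\C[S]_c}(L^2(A))$; in particular $\dim\End_{\C[S]_c}(L^2(A))$ equals the number of $S$\dash orbits in $K$.

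It then remains only to recall, from the discussion preceding Theorem~\ref{theorem:centre}, that $\sum_\pi m_\pi^2=\dim\End_{\C[S]_c}(L^2(A))$ --- a consequence of the semisimplicity of $\C[S]_c$ together with Schur's lemma --- and this finishes the argument. Alternatively one can invoke Theorem~\ref{theorem:decomposition} verbatim: under the section $\alpha\mapsto\sigma(\alpha)$ the relation $\kappa(\sigma(\alpha))\W_k=\W_{\alpha(k)}$ shows that the function playing the role of $\so$ is identically $1$, so the side condition ``$\so(k)=1$ whenever $\sigma\cdot k=k$'' holds for every orbit, and Theorem~\ref{theorem:decomposition} then counts all of them.

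There is no serious obstacle here; the one point requiring a little care is that the count $\sum_\pi m_\pi^2$ is intrinsic to the $\C[S]_c$\dash module $L^2(A)$ --- equivalently, to the algebra $\End_{\C[S]_c}(L^2(A))$ --- and is therefore insensitive to the particular unitary lifts $W(\sigma(\alpha))$ chosen. This is precisely what legitimises replacing the $W_k$\dash description of Theorem~\ref{theorem:decomposition} by the cleaner $\W_k$\dash description of Theorem~\ref{theorem:odd-centre} in the odd case.
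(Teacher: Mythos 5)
Your proposal is correct and follows essentially the same route as the paper: the paper obtains Theorem~\ref{theorem:decomposition-odd} precisely by noting that, via the section $\alpha\mapsto\sigma(\alpha)$, the twisting character is trivial on the normalised operators $\W_k$, so the side condition in Theorem~\ref{theorem:decomposition} is vacuous and $\sum_\pi m_\pi^2=\dim\End_{\C[S]_c}(L^2(A))$ equals the number of $S$\dash orbits, exactly as in your argument via Theorem~\ref{theorem:odd-centre}. Your remark that the count is intrinsic to the $\C[S]_c$\dash module and hence insensitive to the choice of basis $W_k$ versus $\W_k$ is a correct and sensible precaution, but no new ingredient beyond the paper's.
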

In order to extend our results from subgroups of $\Aut(K,e)$ to subgroups of $B_0(G)$, we need
\begin{lemma}
  \label{lemma:conjugacy}
  The conjugacy class of an element $\sigma\in B_0(G)$ intersects $\Aut(K,e)$ if and only if the restriction of $\tilde\so$ to $K^{\st}$ is trivial.
\end{lemma}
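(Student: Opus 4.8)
The plan is to analyze when an element $\sigma \in B_0(G)$ is conjugate to one lying in the section $\Aut(K,e)$. Recall that $\sigma$ acts by $\sigma(\W_k) = \tilde\so(k)\W_{\st(k)}$ with $\tilde\so \in \hat K$, and the elements of $\Aut(K,e)$ are precisely those $\sigma$ with $\tilde\so$ trivial. Conjugating $\sigma$ by an element $\tau \in B_0(G)$, write $\tau(\W_k) = \tilde\tau_1(k)\W_{\tau_2(k)}$. A direct computation of $\tau\sigma\tau^{-1}$ on $\W_k$ shows that its "$\st$-part" becomes $\tau_2 \st \tau_2^{-1}$, while its character part is modified by the coboundary-like factor $\tilde\tau_1 \cdot (\tilde\tau_1 \circ \st)^{-1}$ (transported through $\tau_2$). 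So the question reduces to: given $\st \in \Aut(K,e)$ and $\tilde\so \in \hat K$, can we choose $\psi \in \hat K$ so that $\tilde\so = \psi \cdot (\psi \circ \st^{-1})^{-1}$, i.e., so that $\tilde\so$ equals the image of $\psi$ under the endomorphism $1 - \st^{-1}$ of $\hat K$? (Conjugation by $\tau$ with $\tau_2 = \mathrm{id}$ only rescales, so we may ignore the $\tau_2$-twist for the existence question, then handle it separately.)

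First I would set up this computation carefully, tracking the cocycle factors, to arrive at the clean statement: $\sigma$ is conjugate into $\Aut(K,e)$ if and only if $\tilde\so$ lies in the image of the map $\hat K \to \hat K$, $\psi \mapsto \psi/(\psi\circ\st)$ (up to replacing $\st$ by $\st^{-1}$, which does not change anything relevant). Then I would invoke the standard duality fact: for a finite abelian group $K$ and an automorphism $\alpha$, the image of $1 - \alpha$ on $\hat K$ is exactly the annihilator of $\ker(1 - \alpha)$ acting on $K$, which is $\operatorname{Ann}(K^\alpha)$. Equivalently, a character lies in the image of $1 - \alpha$ on $\hat K$ precisely when it restricts trivially to $K^\alpha = \ker(1-\alpha)$. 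Applying this with $\alpha = \st$ gives: $\tilde\so \in \operatorname{im}(1-\st)$ on $\hat K$ if and only if $\tilde\so|_{K^{\st}}$ is trivial, which is exactly the claim.

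The duality fact itself is the crux, and I would prove it as a short self-contained lemma or cite it: since $\widehat{1-\alpha}$ (the dual map on $\hat K \cong \widehat{\hat K}$... more precisely, the adjoint of $1-\alpha: \hat K \to \hat K$ with respect to the pairing $\hat K \times K \to U(1)$) is the map $1 - \alpha^\vee$ on $K$ where $\alpha^\vee$ is the transpose, and the image of a homomorphism of finite abelian groups is the annihilator of the kernel of its adjoint — here the adjoint of $1-\st$ on $\hat K$ is $1 - \st^{-1}$ up to the identification, and $\ker(1-\st^{-1}) = K^{\st}$. So $\operatorname{im}(1-\st) = \operatorname{Ann}(K^{\st})$, and a character annihilates $K^{\st}$ iff it restricts trivially there.

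The main obstacle I anticipate is purely bookkeeping: correctly computing how the character part $\tilde\so$ transforms under conjugation, including the interaction with the $\tau_2$-component and the cocycle $\tilde c$, and making sure the "$\st$-part" is genuinely unaffected (so that we really are asking whether we can kill $\tilde\so$ rather than also needing to adjust $\st$ within its $\Aut(K,e)$-conjugacy class). Once the conjugation formula is pinned down, the rest is a clean application of Pontryagin duality for finite abelian groups, and the two directions of the "if and only if" both fall out immediately from the annihilator description.
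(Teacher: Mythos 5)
Your proposal is correct and follows essentially the same route as the paper: compute the conjugation formula $\tau\sigma\tau\inv(\W_k)$, observe that conjugation changes the character part of $\sigma$ exactly by characters of the form $\psi\circ(\st-1)$ (the $\tau_2$-part only transports, so it is irrelevant to whether the character part can be killed), and then use the standard duality fact that such characters are precisely those trivial on $\ker(\st-1)=K^{\st}$. Your adjoint/annihilator phrasing of that last step is just a rewording of the paper's implicit use of Pontryagin duality, so there is nothing substantively different to flag.
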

\begin{proof}
  By direct calculation one sees that
  \begin{equation*}
    \tau\sigma\tau\inv(\W_k) = \tilde \so(k)\tilde\tau_1(\st(k)-k) \W_{\tau_2\st\tau_2\inv(k)}.
  \end{equation*}
  Therefore, the class of $\sigma$ intersects $\Aut(K,e)$ if and only if there exists $\tilde\tau_1\in \hat K$ such that $\tilde\so=\tilde\tau_1\circ(\st-1)$.
  But this happens if and only if $\tilde\so$ vanishes on the kernel of $\st-1$, which is nothing but $K^{\st}$.
\end{proof}
\begin{cor}
  \label{cor:abs-char}
  For $\sigma\in B_0(G)$, $\tr W(\sigma)=0$ if and only if the conjugacy class of $\sigma$ in $B_0(G)$ is disjoint from $\Aut(K,e)$.
\end{cor}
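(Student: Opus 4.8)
The plan is to turn Theorem~\ref{theorem:abs-trace} into an explicit vanishing criterion for $\tr W(\sigma)$ and then match that criterion with Lemma~\ref{lemma:conjugacy}. First I would rewrite the absolute-value formula in terms of the normalised Weyl operators. Since each $\W_k$ is a nonzero scalar multiple of $W_k$, the family $\{\W_k:k\in K\}$ is again a basis of $\End_\C(L^2(A))$, and exactly as in the derivation of (\ref{eq:8}) one gets $\kappa(\sigma)(\W_k)=\tilde\so(k)\W_{\st(k)}$ for every $\sigma\in B_0(G)$. Computing $\tr\kappa(\sigma)$ in this basis, the coefficient of $\W_k$ in $\kappa(\sigma)(\W_k)$ is $\tilde\so(k)$ when $\st(k)=k$ and $0$ otherwise; combining with $|\tr W(\sigma)|^2=\tr\kappa(\sigma)$ (the identity already used to prove Theorem~\ref{theorem:abs-trace}) yields
\begin{equation*}
  |\tr W(\sigma)|^2=\sum_{k\in K^{\st}}\tilde\so(k).
\end{equation*}

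Next, $K^{\st}$ is a finite subgroup of $K$ and $\tilde\so|_{K^{\st}}$ is a character of it, so by orthogonality of characters the right-hand side is $|K^{\st}|$ if that character is trivial and $0$ if it is non-trivial. Hence $\tr W(\sigma)=0$ precisely when the restriction of $\tilde\so$ to $K^{\st}$ is non-trivial. By Lemma~\ref{lemma:conjugacy} this restriction is trivial exactly when the conjugacy class of $\sigma$ in $B_0(G)$ meets $\Aut(K,e)$, so taking contrapositives gives the corollary.

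There is no genuine obstacle here; it is an assembly of results already in hand. The one point to be attentive to is that the criterion supplied by Lemma~\ref{lemma:conjugacy} is phrased in terms of $\tilde\so\in\hat K$, whereas the version of the absolute-value formula in Theorem~\ref{theorem:abs-trace} involves the unnormalised $\so$, which is not a homomorphism. The two are reconciled by the normalisation $\W_k=\chi(x/2)W_k$, after which $\tilde\so$ is a bona fide character of $K$ and the sum over $K^{\st}$ can be evaluated by character orthogonality. As a consistency check, for $\sigma$ lying in the image of the section $\alpha\mapsto\sigma(\alpha)$ of $\Aut(K,e)$ one has $\tilde\so\equiv 1$, so $\tr W(\sigma)\neq 0$, in agreement with Theorem~\ref{theorem:absolute-char-odd}.
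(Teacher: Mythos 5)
Your proposal is correct and follows essentially the same route as the paper: Theorem~\ref{theorem:abs-trace} exhibits $|\tr W(\sigma)|^2$ as the sum of a character over the subgroup $K^{\st}$, which vanishes exactly when that character is nontrivial, and Lemma~\ref{lemma:conjugacy} converts nontriviality into the conjugacy-class condition. The only cosmetic difference is that you pass to the normalised basis $\{\W_k\}$ to realise the character as $\tilde\so$, whereas the paper observes directly (via (\ref{eq:2})) that $\so$ restricts to a character of the subgroup $K^{\st}$, on which it coincides with $\tilde\so$.
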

\begin{proof}
  By Theorem~\ref{theorem:abs-trace} $|\tr W(\sigma)|^2$ is the sum over the subgroup $K^{\st}$ of a character. 
  By Lemma~\ref{lemma:conjugacy}, this character is trivial on $K^{\st}$ if and only if the conjugacy class of $\sigma$ in $B_0(G)$ intersects $\Aut(K,e)$.
  The result now follows, since the sum of values taken by a multiplicative character on a finite group is zero if and only if the character is nontrivial.
\end{proof}
\begin{cor}
  \label{cor:decomposition}
  Let $S$ be any subgroup of $B_0(G)$ and the $m_\pi$'s be as in Theorem~\ref{theorem:decomposition}.
  Then $\sum_\pi m_\pi^2$ is the number of $S$\dash orbits in $K$ whose stabilizer in $S$ consists of elements which are conjugate in $B_0(G)$ to an element of $\Aut(K,e)$.
\end{cor}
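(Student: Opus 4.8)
The plan is to deduce this corollary from Theorem~\ref{theorem:decomposition}, re\dash expressing the condition that occurs there in terms of conjugacy by means of Lemma~\ref{lemma:conjugacy} (equivalently, Corollary~\ref{cor:abs-char}). First I would record what Theorem~\ref{theorem:decomposition} gives here: $\sum_\pi m_\pi^2$ is the number of $S$\dash orbits $\mathcal O\subseteq K$ such that $\so(k)=1$ whenever $\sigma\in S$ and $k\in\mathcal O$ satisfy $\st(k)=k$. Since $|A|$ is odd I would rewrite this with the normalized operators: for $\phi=\sum_k\phi_k\W_k$ the defining condition of $\End_{\C[S]_c}(L^2(A))$ becomes $\phi_{\st(k)}=\tilde\so(k)\phi_k$, and because $\tilde\so(k)=\so(k)$ whenever $\st(k)=k$, the relevant orbits are exactly those $\mathcal O$ for which $\tilde\so(k)=1$ for every $k\in\mathcal O$ and every $\sigma$ in the point\dash stabilizer $S_k=\{\sigma\in S:\st(k)=k\}$.

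Next I would fix a base point $k_0\in\mathcal O$ and reorganize this orbit\dash wide requirement around the single subgroup $S_{k_0}\le S$. The stabilizers $S_k$ for $k\in\mathcal O$ are conjugate in $S$ to $S_{k_0}$, and a short computation with the conjugation formula from the proof of Lemma~\ref{lemma:conjugacy}, together with the fact that (for $|A|$ odd) $B_0(G)$ is a split extension of $\Aut(K,e)$ by its normal subgroup $\hat K$, should convert the requirement ``$\tilde\so(k)=1$ for all $k\in\mathcal O$ and all $\sigma\in S_k$'' into the single statement: for every $\sigma\in S_{k_0}$ the character $\tilde\so$ restricts trivially to $K^{\st}$. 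By Lemma~\ref{lemma:conjugacy} this last condition on a given $\sigma$ is exactly the statement that the $B_0(G)$\dash conjugacy class of $\sigma$ meets $\Aut(K,e)$ (equivalently, by Corollary~\ref{cor:abs-char}, that $\tr W(\sigma)\neq 0$). Hence $\mathcal O$ is counted precisely when every element of its stabilizer $S_{k_0}$ is conjugate in $B_0(G)$ to an element of $\Aut(K,e)$, which is the assertion.

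The step I expect to be the main obstacle is this reorganization, and within it the implication from the Theorem~\ref{theorem:decomposition} condition back to that of Lemma~\ref{lemma:conjugacy}. One direction is immediate: if every $\sigma\in S_{k_0}$ is conjugate into $\Aut(K,e)$, then taking $k=k_0$ (which lies in $K^{\st}$ for each such $\sigma$) already gives the Theorem~\ref{theorem:decomposition} condition for $\mathcal O$. The substance is the converse, namely that the pointwise vanishing $\tilde\so(k)=1$, accumulated as $k$ ranges over all of $\mathcal O$ and $\sigma$ over the correspondingly varying stabilizers, is enough to force $\tilde\so$ trivial on all of $K^{\st}$ and not merely at the points of $\mathcal O$. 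Making this work requires keeping careful track of how the fixed subgroups $K^{\st}$ sit relative to $\mathcal O$ inside $B_0(G)$; once that is settled, the remainder is routine bookkeeping with Theorem~\ref{theorem:decomposition}.
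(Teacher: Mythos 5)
Your reduction to Theorem~\ref{theorem:decomposition} is the right starting point (and is exactly what the paper's own one-line proof intends), but the step you yourself flag as ``the main obstacle'' is a genuine gap, and it cannot be closed. Theorem~\ref{theorem:decomposition} counts the orbits $\mathcal O$ such that $\tilde\so(k)=1$ whenever $\sigma\in S$ fixes $k\in\mathcal O$; since $\sigma\mapsto\tilde\so(k_0)$ is a character of the point stabilizer $S_{k_0}$, this amounts to requiring that single character of $S_{k_0}$ to be trivial. The corollary's condition, translated by Lemma~\ref{lemma:conjugacy}, demands the much stronger statement that $\tilde\so$ be trivial on all of $K^{\st}$ for every $\sigma\in S_{k_0}$. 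One direction is immediate, as you observe (since $k_0\in K^{\st}$), but the converse is what carries all the content, and your proposal only promises that a ``short computation \ldots should convert'' one condition into the other, deferring it with ``once that is settled.'' That is precisely the step that fails.

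It fails because the two conditions are genuinely inequivalent. Take $S=\langle\sigma\rangle$ where $\sigma\in\hat K\subset B_0(G)$ is the inner automorphism $W_k\mapsto e(l,k)W_k$ for some $l\in K$ of order $m>1$; then $W(\sigma)$ is $W_l$ up to a scalar, $\st$ is the identity, and every $S$\dash orbit is a singleton. Here $\End_{\C[S]_c}(L^2(A))$ is the commutant of $W_l$, spanned by the $W_k$ with $e(k,l)=1$, so $\sum_\pi m_\pi^2=|K|/m$, in agreement with Theorem~\ref{theorem:decomposition} (the orbits $\{k\}$ with $e(l,k)=1$). But for $0<j<m$ the element $\sigma^j$ has $K^{\st}=K$ and $\tilde\so^j=e(jl,\cdot)$ nontrivial, so by Lemma~\ref{lemma:conjugacy} no nonidentity element of $S$ is conjugate in $B_0(G)$ into $\Aut(K,e)$, and the count prescribed by the corollary is $0$. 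So the equivalence you are trying to establish is false, and the statement itself needs amending: the correct condition on an orbit is that the character $\sigma\mapsto\tilde\so(k_0)$ of $S_{k_0}$ be trivial, not that every element of $S_{k_0}$ meet $\Aut(K,e)$ under $B_0(G)$\dash conjugacy (the two do coincide when $S\subseteq\Aut(K,e)$, which is Theorem~\ref{theorem:decomposition-odd}). Your instinct that this reorganization ``requires keeping careful track'' of how $K^{\st}$ sits relative to $\mathcal O$ was exactly right; no amount of bookkeeping will make it go through in general.
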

\begin{proof}
  This result follows from Theorem~\ref{theorem:decomposition} and Lemma~\ref{lemma:conjugacy}.
\end{proof}
\section{An example}
\label{sec:example}
Take $A=(\Z/p^n\Z)^g$, where $p$ is an odd prime number and $n$ and $g$ are positive integers.
$\hat A$ can be identified with $A$ by defining the character $\chi_x$ corresponding to $x$ by
\begin{equation*}
  \chi_x(y) = e^{2\pi i x\cdot y/p^n},
\end{equation*}
where $x\cdot y$ denotes the usual dot product of $x$ and $y$.
Thus $K=(\Z/p^n\Z)^{2g}$, whose elements are denoted as pairs $(x,y)$, with $x,y\in A$.
The alternating form $e$ from Section~\ref{sec:autos} is given by
\begin{equation*}
  e((x,y),(x',y'))=\chi_{y'}(x)\chi_y(x')\inv=e^{2\pi i(x\cdot y'-x'\cdot y)/p^n}.
\end{equation*}
The group $\Aut(K,e)$ is the symplectic group, namely the group of automorphisms of $K$ which preserve the standard symplectic form $A\times A\to \Z/p^n\Z$ given by $((x,y),(x',y'))\mapsto x\cdot y'-x'\cdot y$.
It is not difficult to see that $\Aut(K,e)$ has $n+1$ orbits in $K$ with representatives $(p^re_1,0)$ for $r=0,\ldots,n$, where $e_1\in A$ denotes the first coordinate vector $(1,0,\ldots,0)$ in $A$.
The orbit of $(p^re_1,0)$ consists of vectors all of whose entries are divisible by $p^r$, but at least one entry is not divisible by $p^{r+1}$.

According to Theorem~\ref{theorem:odd-centre}, $\End_{\C[\Aut(K,e)]_c}$ has basis spanned by the operators
\begin{equation*}
  \Delta_r=\sum_{k\in p^rK}\W_k, \text{ for } r=0,\ldots,n.
\end{equation*}
\begin{prop}
  \label{prop:product}
  For $r,s\in \{0,\ldots,n\}$,
  \begin{equation*}
    \Delta_r\Delta_s=p^{2g(n-\max\{r,s\})}\Delta_{\max\{\min\{r,s\},n-\max\{r,s\}\}}.
  \end{equation*}
\end{prop}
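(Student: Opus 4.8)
The plan is to read off the coefficient of each normalised Weyl operator $\W_m$ in $\Delta_r\Delta_s$ directly from the multiplication rule $\W_k\W_l=\tilde c(k,l)\W_{k+l}$ for normalised Weyl operators (Section~\ref{sec:odd}), using only two features of the cocycle $\tilde c$: it is bi\dash additive in its two arguments, and $\tilde c(k,k)=1$ for every $k\in K$. Both are immediate from the formula $\tilde c((u,\chi),(v,\lambda))=\chi(v/2)\lambda(u/2)\inv$. From these one deduces, by expanding $1=\tilde c(k+m,k+m)$, that $\tilde c(m,k)=\tilde c(k,m)\inv$, so that $k\mapsto\tilde c(k,m)$ and $k\mapsto\tilde c(m,k)$ are mutually inverse characters of $K$; and, whenever $m=k+l$, that $\tilde c(m-l,l)=\tilde c(m,l)\,\tilde c(l,l)\inv=\tilde c(m,l)$ and, likewise, $\tilde c(k,m-k)=\tilde c(k,m)$. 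Put $a=\min\{r,s\}$ and $b=\max\{r,s\}$, so that $p^bK\subseteq p^aK$ and $|p^bK|=p^{2g(n-b)}$.

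Next I would expand $\Delta_r\Delta_s=\sum_k\sum_l\tilde c(k,l)\W_{k+l}$, where one of $k,l$ ranges over $p^aK$ and the other over $p^bK$. Since $k+l\in p^aK$ throughout, no $\W_m$ with $m\notin p^aK$ appears; and for a fixed $m\in p^aK$ the admissible pairs $(k,l)$ with $k+l=m$ are parametrised bijectively by whichever of $k,l$ ranges over the smaller group $p^bK$, the other being then forced and automatically lying in $p^aK$. Using the identities from the first paragraph, the coefficient of $\W_m$ therefore equals $\sum_{t\in p^bK}\tilde c(m,t)$ if the free variable is the index of the right factor, and $\sum_{t\in p^bK}\tilde c(t,m)$ if it is the index of the left factor; in both cases this is the sum over the subgroup $p^bK$ of a character of $K$, and since the two characters in question are mutually inverse, one of them is trivial on $p^bK$ exactly when the other is. Hence, for $m\in p^aK$, the coefficient of $\W_m$ is $p^{2g(n-b)}$ when $\tilde c(m,\cdot)$ is trivial on $p^bK$ and is $0$ otherwise.

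It remains to decide for which $m$ the character $\tilde c(m,\cdot)$ is trivial on $p^bK$. Since $p^bK\cong(\Z/p^{n-b}\Z)^{2g}$ has odd order (recall $p$ is odd), a character of $K$ is trivial on $p^bK$ if and only if its square is; and a short computation gives, for $m=(x,y)$, that $\tilde c(m,\cdot)^2$ is the character $(x',y')\mapsto e^{2\pi i(y\cdot x'-y'\cdot x)/p^n}$ of $K$. Evaluating this on the generators $(p^be_j,0)$ and $(0,p^be_j)$ of $p^bK$, where $e_1,\ldots,e_g$ is the standard basis of $A$, shows that it is trivial on $p^bK$ precisely when every coordinate of $x$ and of $y$ is divisible by $p^{n-b}$, i.e.\ when $m\in p^{n-b}K$. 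Consequently the coefficient of $\W_m$ in $\Delta_r\Delta_s$ equals $p^{2g(n-b)}$ for $m\in p^aK\cap p^{n-b}K=p^{\max\{a,n-b\}}K$ and $0$ for every other $m$, so $\Delta_r\Delta_s=p^{2g(n-b)}\Delta_{\max\{a,n-b\}}$, which is precisely the assertion.

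I do not expect a genuine obstacle: the one idea needed is to exploit bi\dash additivity of $\tilde c$ in order to collapse the defining double sum into a single character sum over the subgroup $p^bK$, after which the remaining steps are short and routine. The points worth a little care are the passage, in the third paragraph, from a character's being trivial on $p^bK$ to its square being trivial---this is exactly the step at which the hypothesis that $|A|$ is odd is used---and the degenerate cases in which $\max\{a,n-b\}=n$: there $p^nK=\{0\}$, $\Delta_n=\W_0$ is the identity operator, and the formula specialises correctly.
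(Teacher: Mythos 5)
Your proof is correct and follows essentially the same route as the paper's: expand $\Delta_r\Delta_s$, use bi\dash additivity of $\tilde c$ together with $\tilde c(k,k)=1$ to collapse the double sum into a single character sum over $p^{\max\{r,s\}}K$, and then determine exactly when that character is trivial. The only difference is that you spell out the triviality criterion (via the square of the character and the oddness of $|p^bK|$), a detail the paper asserts without proof.
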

\begin{proof}
  If $\Delta_r\Delta_s$ has the expansion $\sum_{x\in K}\phi_x \W_x$ then
  \begin{eqnarray}
    \nonumber
    \phi_x & = & \sum_{k\in p^rK,\;l\in p^sK,\;k+l=x}\tilde c(k,l)\\
    \label{eq:6}
    & = & \sum_{k\in p^rK\cap(x+p^sK)} \tilde c(k,x)\\
    \label{eq:7}
    & = & \sum_{l\in p^sK\cap(x+p^rK)} \tilde c(x,l).
  \end{eqnarray}
  In deducing the above identities we used the facts that $\tilde c$ is a bicharacter, and that $\tilde c(k,k)=\tilde c(l,l)= 1$ for all $k,l\in K$.
  Since $p^rK+p^sK=p^{\min\{r,s\}}K$, the above sums are empty (hence $0$) unless $x\in p^{\min\{r,s\}}K$.
  If $x\in p^sK$, then (\ref{eq:6}) evaluates $\phi_x$ as a sum over $p^{\max\{r,s\}}K$ of a character.
  This character is trivial if and only if $x\in p^{n-\max\{r,s\}}K$.
  When this happens the sum is $|p^{\max\{r,s\}}K|=p^{2g(n-\max\{r,s\})}$.
  In all other cases it is $0$.
  If $x\in p^rK$, then a similar argument using (\ref{eq:7}) works.
\end{proof}
We note, in particular, that $\End_{\C[\Aut(K,e)]_c}(L^2(A))$ is commutative.
As a consequence
\begin{theorem}
  \label{theorem:count}
  The Weil representation of $\Aut(K,e)$ has a multiplicity free decomposition into $n+1$ irreducible representations.
\end{theorem}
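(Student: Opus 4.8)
The plan is to obtain Theorem~\ref{theorem:count} by combining two facts that are already at our disposal: that the algebra $\End_{\C[\Aut(K,e)]_c}(L^2(A))$ has dimension exactly $n+1$, and that it is commutative. The dimension statement is Theorem~\ref{theorem:odd-centre} together with the observation made just before Proposition~\ref{prop:product}: $\Aut(K,e)$ has precisely $n+1$ orbits on $K$ (with representatives $(p^re_1,0)$, $r=0,\dots,n$), so the operators $\Delta_0,\dots,\Delta_n$ form a basis of $\End_{\C[\Aut(K,e)]_c}(L^2(A))$. The commutativity is exactly the remark following Proposition~\ref{prop:product}: the product formula $\Delta_r\Delta_s=p^{2g(n-\max\{r,s\})}\Delta_{\max\{\min\{r,s\},\,n-\max\{r,s\}\}}$ is manifestly symmetric in $r$ and $s$.

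Next I would feed this into the general structure theory recalled in Section~\ref{sec:decomposition}. Since $\C[\Aut(K,e)]_c$ is semisimple, $L^2(A)=\sum_\pi m_\pi V_\pi$ and, by Schur's lemma, $\End_{\C[\Aut(K,e)]_c}(L^2(A))\cong\bigoplus_\pi M_{m_\pi}(\C)$. A matrix algebra $M_m(\C)$ is commutative if and only if $m=1$, so the commutativity just established forces $m_\pi=1$ for every $\pi$ occurring in the decomposition; that is, the Weil representation of $\Aut(K,e)$ is multiplicity free. Moreover the number of constituents then equals $\sum_\pi m_\pi^2=\dim\End_{\C[\Aut(K,e)]_c}(L^2(A))=n+1$. (Alternatively one may simply quote Theorem~\ref{theorem:decomposition-odd}, which already gives $\sum m_\pi^2=n+1$; once all $m_\pi=1$ this count is the number of summands.)

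I do not expect a real obstacle here, since the genuinely substantive computation is Proposition~\ref{prop:product}, which is already proved. The only point requiring a little care is the bookkeeping behind the dimension count: one must know both that the $\Delta_r$ span $\End_{\C[\Aut(K,e)]_c}(L^2(A))$ (Theorem~\ref{theorem:odd-centre}, since a function $k\mapsto\phi_k$ constant on $\Aut(K,e)$-orbits is a linear combination of the indicators of the $n+1$ orbits) and that they are linearly independent (they are supported on the pairwise disjoint sets $p^rK\setminus p^{r+1}K$ in the basis $\{\W_k\}$). Granting that, Theorem~\ref{theorem:count} follows immediately from the two displayed facts above.
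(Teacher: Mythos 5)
Your proposal is correct and follows essentially the same route as the paper: the paper also deduces Theorem~\ref{theorem:count} from the commutativity of $\End_{\C[\Aut(K,e)]_c}(L^2(A))$ (visible from the symmetric product formula of Proposition~\ref{prop:product}) together with the fact that this algebra has dimension $n+1$, the number of $\Aut(K,e)$-orbits in $K$. Your added bookkeeping on the spanning and independence of the $\Delta_r$ is a fine, if routine, elaboration of what the paper leaves implicit.
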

Suppose that $r\leq n-r$. For any $f\in L^2(A)$,
\begin{equation*}
  p^{-2gr}\Delta_{n-r} f (u) = p^{-gr}\sum_{x\in p^{n-r}rA} f(u-x)\sum_{y\in p^{n-r}r A}p^{-gr}\chi_y(u-x/2).
\end{equation*}
The inner sum vanishes unless $u-x/2\in p^rA$, in which case it is one.
Therefore
\begin{equation*}
  p^{-2gr}\Delta_{n-r} f = p^{-gr}\sum_{x\in p^{n-r}rA\cap (2u+p^rA)}f(u-x).
\end{equation*}
If $u\notin p^rA$ and $x\in p^{n-r}A$, since $x$ is also in $p^rA$, $x\notin 2u+p^rA$.
Therefore, the above sum is empty, hence zero.
On the other hand, if $u\in p^rA$, then $2u+p^rA=p^rA$, and the summation is over $p^{n-r}rA\cap p^rA=p^{n-r}A$.
We get an average over the $p^{n-r}A$\dash translates of $f$.
We have
\begin{equation*}
  (p^{-2gr}\Delta_{n-r}f) (u)=
  \begin{cases}
    \frac 1{|p^{n-r}A|}\sum_{x\in p^{n-r}A}f(u-x) & \text{ if } u\in p^rA,\\
    0 & \text{ otherwise}.
  \end{cases}
\end{equation*}
Thus, whenever $r\leq n-r$, $p^{-2gr}\Delta_{n-r}$ is the projection onto the subspace $V_{n-2r}$ of $L^2(A)$ consisting of functions that are supported on $p^rA$ and are invariant under $p^{n-r}A$, an invariant subspace for the Weil representation of dimension $p^{g(n-2r)}$.
Unless $n=2r$, this space can be further decomposed as $V_{n-2r}=V_{n-2r}^+\oplus V_{n-2r}^-$, consisting of the even and odd functions in $V_r$ of dimensions $(p^{g(n-2r)}\pm 1)/2$.
We have constructed two orthogonal nested sequences of invariant subspaces in $L^2(A)$:
\begin{equation*}
  V_n^\pm\supset V_{n-2}^\pm \supset \cdots \supset V_{n-2\lfloor n/2\rfloor}^\pm,
\end{equation*}
where $V^-_0=0$ if $n$ is even.
Let $U_{n-2r}^\pm$ denote the orthogonal complement of $V_{n-2r-2}^\pm$ in $V_{n-2r}^\pm$ for $r=0,\ldots,\lfloor n/2\rfloor -1$, and $U_{n-2\lfloor n/2\rfloor}^\pm = V_{n-2\lfloor n/2\rfloor}$.
Excluding $U_0^-$ when $n$ is even, these are $n+1$ pairwise orthogonal non-trivial invariant subspaces for  the Weil representation.
By of Theorem~\ref{theorem:count}, these must be the irreducible invariant subspaces for the Weil representation.
We get\pagebreak
\begin{theorem}
  \label{theorem:dimensions}
  The decomposition of $L^2(A)$ into irreducible invariant subspaces for the Weil representation is given by
  \begin{equation*}
    L^2(A)= \bigoplus_{r=0}^{\lfloor n/2\rfloor} (U_{n-2r}^+\oplus U_{n-2r}^-)
  \end{equation*}
  (with $U_0^-=0$ excluded when $n$ is even), where
  \begin{equation*}
    \dim U_{n-2r}^\pm = 
    \begin{cases} 
      (p^{g(n-2r)}-p^{g(n-2r-2)})/2 & \text{ if }0\leq r<\lfloor n/2\rfloor\\
      (\pm 1 + p^{g(n-r)})/2 & \text{ if } r=\lfloor n/2\rfloor.
    \end{cases}
  \end{equation*}
\end{theorem}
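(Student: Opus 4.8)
The plan is that almost all the work has already been done in the discussion preceding the statement; what remains is to assemble it into two steps: first the orthogonal decomposition of $L^2(A)$ into the subspaces $U_{n-2r}^\pm$ together with their irreducibility, and then the dimension count.

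For the decomposition, I would begin from the strictly decreasing chains $V_n^\pm\supset V_{n-2}^\pm\supset\cdots\supset V_{n-2\lfloor n/2\rfloor}^\pm$ of invariant subspaces, strictness being clear from $\dim V_{n-2r}=p^{g(n-2r)}$ as recorded above. Since $U_{n-2r}^\pm$ is the orthogonal complement of $V_{n-2r-2}^\pm$ inside $V_{n-2r}^\pm$, while the bottom term $U_{n-2\lfloor n/2\rfloor}^\pm$ is all of $V_{n-2\lfloor n/2\rfloor}^\pm$, peeling off successive complements yields $V_n^\pm=\bigoplus_{r=0}^{\lfloor n/2\rfloor}U_{n-2r}^\pm$. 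As $V_n$ consists of all functions on $A$ (supported on $p^0A=A$, invariant under $p^nA=0$), it equals $L^2(A)$, and $V_n^+\oplus V_n^-$ is precisely the even--odd decomposition of Lemma~\ref{lemma:even-odd}; summing over the two signs gives $L^2(A)=\bigoplus_{r=0}^{\lfloor n/2\rfloor}(U_{n-2r}^+\oplus U_{n-2r}^-)$. When $n$ is even one has $\dim V_0^-=0$, so $U_0^-$ vanishes and is omitted; every remaining summand is nonzero, since the chains are strictly decreasing and $\dim V_{n-2\lfloor n/2\rfloor}^+\geq1$. Thus we have exactly $n+1$ nonzero pairwise orthogonal invariant subspaces giving a direct sum decomposition of $L^2(A)$. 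By Theorem~\ref{theorem:count}, $L^2(A)$ has exactly $n+1$ irreducible constituents counted with multiplicity, so, as each of these $n+1$ summands contributes at least one constituent, each summand is irreducible. This proves the asserted decomposition.

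For the dimensions, I would use $\dim V_{n-2r}=p^{g(n-2r)}$ together with the observation that, because $p$ is odd, the reflection $v\mapsto -v$ on the index set $p^rA/p^{n-r}A$ of $V_{n-2r}$ has a single fixed point, so the even and odd parts satisfy $\dim V_{n-2r}^\pm=(p^{g(n-2r)}\pm1)/2$. For $0\le r<\lfloor n/2\rfloor$, subtracting gives $\dim U_{n-2r}^\pm=\dim V_{n-2r}^\pm-\dim V_{n-2r-2}^\pm=(p^{g(n-2r)}-p^{g(n-2r-2)})/2>0$; for $r=\lfloor n/2\rfloor$ one simply has $U_{n-2\lfloor n/2\rfloor}^\pm=V_{n-2\lfloor n/2\rfloor}^\pm$, of dimension $(p^{g(n-2\lfloor n/2\rfloor)}\pm1)/2$, where $n-2\lfloor n/2\rfloor$ is $0$ or $1$ according to the parity of $n$. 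The one point that needs care is this bottom-of-the-chain bookkeeping: one must confirm that every $U_{n-2r}^\pm$ is nonzero apart from $U_0^-$ when $n$ is even, so that the pigeonhole step in the decomposition assigns exactly one irreducible constituent to each summand; this is exactly what the dimension formulas show, since $(p^{g(n-2\lfloor n/2\rfloor)}-1)/2$ vanishes precisely when $n-2\lfloor n/2\rfloor=0$.
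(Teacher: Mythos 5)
Your proposal is correct and follows essentially the same route as the paper: build the nested chains $V_{n-2r}^\pm$ via the projections $p^{-2gr}\Delta_{n-r}$ and the even/odd idempotents of Lemma~\ref{lemma:even-odd}, peel off orthogonal complements to get $n+1$ nonzero invariant summands, and invoke the multiplicity-free count of Theorem~\ref{theorem:count} to force each summand to be irreducible. Your dimension bookkeeping at the bottom of the chain, $(p^{g(n-2\lfloor n/2\rfloor)}\pm 1)/2$, is the correct value (the exponent $g(n-r)$ in the printed statement is a slip for $g(n-2r)$).
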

When $m\leq n$, there is a natural surjection $Sp_{2g}(\Z/p^n\Z)\to Sp_{2g}(\Z/p^m\Z)$.
Denote its kernel by $N_m$.
The level of a projective representation of $Sp_{2g}(\Z/p^n\Z)$ is defined as the largest $m$ for which $N_m$ acts by a scalar multiple of the identity.
Thus, for example, a projective representation has level one if it is not a scalar multiple of the identity and it comes from a projective representation of $Sp_{2g}(\Z/p\Z)$ by composition with the natural surjection $Sp_{2g}(\Z/p^n\Z)\to Sp_{2g}(\Z/p\Z)$.
\begin{lemma}
  \label{lemma:level}
  The level of the Weil representation of $Sp_{2g}(\Z/p^n\Z)$ is $n$.
\end{lemma}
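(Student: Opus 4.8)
The plan is to pin down the level by showing that, for $n\geq 2$, the subgroup $N_{n-1}$ does \emph{not} act by scalar multiples of the identity in the Weil representation, while $N_n=\{1\}$ of course does; since $N_m\supseteq N_{n-1}$ for every $m\leq n-1$, no $N_m$ with $m<n$ acts by scalars, so the largest $m$ for which $N_m$ does is $m=n$. (The case $n=1$ is the degenerate statement that the Weil representation of $Sp_{2g}(\Z/p\Z)$, which is a nontrivial group for $g\geq1$, is not a scalar representation, and it falls out of the same argument.)

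The key point I would establish is that the Weil representation of $\Aut(K,e)=Sp_{2g}(\Z/p^n\Z)$ is a faithful projective representation: if $\alpha\in\Aut(K,e)$ and $W(\sigma(\alpha))$ is a scalar operator, then $\alpha=\mathrm{id}$. Rewriting the intertwining property~(\ref{eq:3}) as $W(\sigma)\,g\,W(\sigma)\inv=\sigma(g)$ for all $g\in G$ and applying it with $g=\W_k$, together with the normalised identity $\sigma(\alpha)(\W_k)=\W_{\alpha(k)}$ recorded in Section~\ref{sec:odd}, gives $W(\sigma(\alpha))\,\W_k\,W(\sigma(\alpha))\inv=\W_{\alpha(k)}$. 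If $W(\sigma(\alpha))$ were a scalar, the left-hand side would equal $\W_k$, forcing $\W_{\alpha(k)}=\W_k$ for all $k\in K$. Since $\W_k$ is a nonzero multiple of the Weyl operator $W_k$, and the $W_k$ are linearly independent by Lemma~\ref{lemma:basis}, this forces $\alpha(k)=k$ for all $k$, i.e. $\alpha=\mathrm{id}$.

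Finally I would exhibit a nonidentity element of $N_{n-1}$ when $n\geq 2$: the matrix $\mat{I}{p^{n-1}I}{0}{I}$ is symplectic (its off-diagonal block $p^{n-1}I$ is symmetric), is congruent to $I$ modulo $p^{n-1}$, and is distinct from $I$ because $p^{n-1}\neq 0$ in $\Z/p^n\Z$; so it lies in $N_{n-1}\setminus\{\mathrm{id}\}$. By the faithfulness just proved, its image under the Weil representation is not a scalar, so $N_{n-1}$ does not act by scalars, which completes the argument. I do not expect a real obstacle: the only things requiring a little care are the orientation of the chain $N_1\supseteq\cdots\supseteq N_n$ (so that the failure for $N_{n-1}$ propagates to all smaller indices) and the degenerate case $n=1$, both routine.
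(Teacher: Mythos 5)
Your proof is correct, and it takes a somewhat different route from the paper's. The paper deduces the lemma from its character formula (Theorem~\ref{theorem:absolute-char-odd}): writing an element of $N_{n-1}$ as $g=I+p^{n-1}X$, it observes that $K^g=\ker(p^{n-1}X)$ is a proper subgroup of $K$ when $X\not\equiv 0\pmod p$, so $|\tr W(g)|^2=|K^g|<|K|=p^{2gn}$, and a unitary operator on a space of dimension $p^{gn}$ with $|\tr|<p^{gn}$ cannot be a scalar. You instead prove projective faithfulness directly: if $W(\sigma(\alpha))$ is scalar then conjugation by it fixes each $\W_k$, forcing $\W_{\alpha(k)}=\W_k$ and hence $\alpha=\mathrm{id}$ by the linear independence in Lemma~\ref{lemma:basis}. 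The two arguments are close cousins --- the character formula is itself obtained by computing the trace of the conjugation action $\kappa$ in the Weyl basis, so both ultimately rest on Lemma~\ref{lemma:basis} --- but yours bypasses the character formula and is marginally more elementary and self-contained. You also add a small point of rigour that the paper leaves implicit: you exhibit a concrete nontrivial element $\mat{I}{p^{n-1}I}{0}{I}$ of $N_{n-1}$, whereas the paper tacitly assumes $N_{n-1}$ contains an element with $X\not\equiv 0\pmod p$. Your handling of the containments $N_m\supseteq N_{n-1}$ for $m\leq n-1$ and of the degenerate case $n=1$ is also correct.
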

\begin{proof}
  It suffices to show that there exists an element of $N_{n-1}$ which does not act by a scalar multiple of the identity, which is the same as showing that the absolute value of the character of the Weil representation evaluated at this element is less than $p^{gn}$.
  The matrix of an element of $N_{n-1}$ can be written as $g=I+p^{n-1}X$ for some $2n\times 2n$\dash matrix $X$.
  $K^g$ then consists of vectors in the kernel of $p^{n-1}X$.
  Thus if $X$ is non-zero modulo $p$, Theorem~\ref{theorem:absolute-char-odd} implies that $|\tr W(g)|<p^{gn}$, as desired.
\end{proof}
Recall that two projective $\rho_1$ and $\rho_2$ of a group on Hilbert spaces $\Hilb_1$ and $\Hilb_2$ are equivalent if there exists an isometry $W:\Hilb_1\to \Hilb_2$ such that $W\circ \rho_1(\sigma)$ and $\rho_2 \circ W(\sigma)$ differ by a constant for each element $\sigma$ of the group.
\begin{theorem}
  \label{theorem:levels}
  In the decomposition of Theorem~\ref{theorem:dimensions}, the Weil representation on $U_{n-2r}^\pm$ has level $n-2r$ for $r=0,\ldots \lfloor n/2\rfloor$.
  Let $q:SL_{2g}(\Z/p^n\Z)\to SL_{2g}(\Z/p^{n-2r}\Z)$ denote the natural quotient map.
  If $W_{n-2r}$ denotes the Weil representation of $Sp_{2g}(\Z/p^{n-2r}\Z)$ on $L^2((\Z/p^{n-2r}\Z)^g)$, then $W_{n-2r}\circ q$ is equivalent to the restriction of $W$ to $V_{n-2r}$.
\end{theorem}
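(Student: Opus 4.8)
The plan is to identify the invariant subspace $V_{n-2r}$ with a copy of $L^2(A')$, $A'=(\mathbf Z/p^{n-2r}\mathbf Z)^g$, on which the restricted Weil representation becomes $W_{n-2r}\circ q$; the equivalence statement then falls out of the theorem of Stone, von Neumann and Mackey, and the level statement is obtained by reducing, via the same identification, to the case $r=0$ and a character estimate in the spirit of Lemma~\ref{lemma:level}.

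Write $m=n-2r$ and $K'=(\mathbf Z/p^m\mathbf Z)^{2g}$, and let $G'$ be the Heisenberg group of $A'=(\mathbf Z/p^m\mathbf Z)^g$. First I would put a $G'$\dash module structure on $V_{n-2r}$. For $k=(x,\chi_y)\in p^rK$ the normalised Weyl operator $\W_k$ preserves $V_{n-2r}$: translation by $x\in p^rA$ keeps the support inside $p^rA$, and $\chi_y$ with $y\in p^rA$ is trivial on $p^{n-r}A$, so modulation by $\chi_y$ preserves $p^{n-r}A$\dash invariance. For $k\in p^{n-r}K$ the operator $\W_k$ acts as the identity on $V_{n-2r}$; here the hypothesis $r\leq n/2$, which gives $p^{n-r}A\subseteq p^rA$, is precisely what makes the normalising scalar $\chi_y(x/2)$ trivial too. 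Hence $\{\W_k:k\in p^rK\}$ restricted to $V_{n-2r}$ descends to operators indexed by $p^rK/p^{n-r}K$, which via division by $p^r$ is identified with $K'$; one checks that under this identification $\tilde c|_{p^rK}$ transports to the cocycle of $A'$ (equivalently, $e$ to the standard symplectic form on $K'$), so $V_{n-2r}$ becomes a $G'$\dash representation with standard central character. Since $\dim V_{n-2r}=p^{gm}=\sqrt{|K'|}$, Theorem~\ref{theorem:SvNM} makes it irreducible and supplies an isometry $\Phi\colon L^2(A')\to V_{n-2r}$ intertwining it with the Schrödinger representation of $G'$. Now every $\sigma\in\Aut(K,e)=Sp_{2g}(\mathbf Z/p^n\mathbf Z)$ preserves the characteristic subgroups $p^rK$ and $p^{n-r}K$, hence induces an automorphism $\bar\sigma$ of $K'$ preserving its form, and chasing the identifications shows $\bar\sigma=q(\sigma)$; from $W(\sigma)\W_k=\W_{\sigma(k)}W(\sigma)$ it follows that $\Phi\inv\,W(\sigma)|_{V_{n-2r}}\,\Phi$ intertwines the Schrödinger representation with its $q(\sigma)$\dash twist, whence, by the uniqueness up to scaling in Theorem~\ref{theorem:SvNM}, it is a scalar multiple of $W_{n-2r}(q(\sigma))$. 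This says that $W|_{V_{n-2r}}$ is equivalent as a projective representation to $W_{n-2r}\circ q$, proving the second assertion, for every $r$.

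For the level statement I would first observe that the operators $\Delta_j$ — and hence the orthogonal projections onto the $V_{n-2s}$ ($s\geq r$) and onto their even and odd parts, the parity involution being $\delta_K/\sqrt{|K|}$ by Lemma~\ref{lemma:even-odd} (a scalar multiple of $W(-I)$) — are built from the Weyl operators, so $\Phi$ carries the analogous filtration and pieces of $L^2(A')$ onto $V_{n-2s}$ and $U_{n-2s}^\pm$. Thus $W$ on $U_{n-2r}^\pm$ is the restriction of $W_{n-2r}$ to its own top pieces, composed with $q$; since $q$ is onto and the congruence subgroups of $Sp_{2g}(\mathbf Z/p^n\mathbf Z)$ map onto those of $Sp_{2g}(\mathbf Z/p^{n-2r}\mathbf Z)$, this composition has the same level as $W_{n-2r}$ on its top pieces. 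So it suffices to treat $r=0$: to show the even and odd top pieces $U_n^\pm$ of the Weil representation of $Sp_{2g}(\mathbf Z/p^n\mathbf Z)$ have level $n$. For $n\leq1$ these are, apart from trivial one\dash dimensional summands, the irreducible even and odd halves of the Weil representation of $Sp_{2g}(\mathbf F_p)$, which are non\dash scalar and hence of level one. For $n\geq2$ I would, as in Lemma~\ref{lemma:level}, pick an invertible matrix $X$ over $\mathbf F_p$ with $h=I+p^{n-1}X$ symplectic, so $h\in N_{n-1}$. Then $K^h$ is the kernel of $p^{n-1}X$, so $|\tr W(h)|=p^{g(n-1)}$ by Theorem~\ref{theorem:absolute-char-odd}; a parallel computation gives $|\tr W(-h)|=1$; and $h\in N_{n-1}\subset N_{n-2}$ acts on $V_{n-2}$ by a scalar of absolute value one, by the equivalence just proved (applied to the subspace $V_{n-2}$). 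Writing $V_n^\pm$ and $V_{n-2}^\pm$ as the $\pm1$\dash eigenspaces of $W(-I)$ and expanding $\tr(W(h)|_{U_n^\pm})=\tr(W(h)|_{V_n^\pm})-\tr(W(h)|_{V_{n-2}^\pm})$ in terms of $\tr W(h)$, $\tr W(-h)$ and the trace over $V_{n-2}$, the bounds above yield $|\tr(W(h)|_{U_n^\pm})|<\dim U_n^\pm$ (the matter reducing to $p^{g(n-1)}+2p^{g(n-2)}+2<p^{gn}$, which holds for all $p\geq3$, $g\geq1$, $n\geq2$). Hence $W(h)$ is non\dash scalar on each of $U_n^+$ and $U_n^-$, so $N_{n-1}$ does not act by scalars there; since $N_n=\{1\}$, the level is exactly $n$.

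The step I expect to be the main obstacle is this last one. The equivalence $W|_{V_{n-2r}}\cong W_{n-2r}\circ q$ pins down the level of $V_{n-2r}$ for free, but not that of the individual summands $U_{n-2r}^\pm$; a crude trace bound only shows $W(h)$ is non\dash scalar on the whole of $U_n^+\oplus U_n^-$. Separating the two halves forces the use of the parity operator $W(-I)$, hence control of the small quantities $|\tr W(-h)|$ and the scalar by which $N_{n-1}$ acts on $V_{n-2}$, followed by a check that the resulting numerical inequality really holds — where choosing $X$ invertible is what makes it go through uniformly for $n\geq2$.
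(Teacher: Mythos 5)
Your proof of the second assertion is essentially the paper's: the paper also identifies $A'=(\Z/p^{n-2r}\Z)^g$ with $p^rA/p^{n-r}A$, builds the explicit embedding $j:L^2(A')\to V_{n-2r}$ intertwining the normalised Weyl operators $\W_{k'}$ and $\W_k$, and then reads off the equivalence from the defining relation $W(\sigma)\W_kW(\sigma)\inv=\W_{\sigma(k)}$; your phrasing via a $G'$\dash module structure on $V_{n-2r}$ plus the uniqueness in Theorem~\ref{theorem:SvNM} is the same argument in slightly more abstract clothing. Where you genuinely diverge is the level assertion. The paper dismisses it in one sentence (``follows from Lemma~\ref{lemma:level} and the second assertion''), which really only pins down the level of the whole block $V_{n-2r}$, not of the individual summands $U_{n-2r}^\pm$: a priori $N_{n-2r-1}$ could act by scalars on one parity piece while failing to on the other. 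You correctly identify this as the crux and close it by choosing $h=I+p^{n-1}X$ with $X$ invertible mod $p$ and symplectic\dash Lie\dash algebra\dash valued (e.g.\ $X=J$), combining $|\tr W(h)|=p^{g(n-1)}$, $|\tr W(-h)|=1$, and the scalar action on $V_{n-2}$ to bound $|\tr(W(h)|_{U_n^\pm})|$ strictly below $\dim U_n^\pm$; the numerical inequality $p^{g(n-1)}+2p^{g(n-2)}+2<p^{gn}$ does hold for $p\geq 3$. So your argument is more complete than the paper's on this point. One caveat that affects both you and the paper: when $p^g=3$ the space $U_1^-$ is one\dash dimensional, so every group element acts on it by a scalar and its level is $0$, not $1$; your blanket claim that the odd half for $n=1$ is ``non\dash scalar and hence of level one'' fails there, but this is an edge case the theorem's statement itself overlooks.
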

\begin{proof}
  The first assertion follows from Lemma~\ref{lemma:level} and the second, which we now prove.
  Let $A'=\Z/p^{n-2r\Z}$.
  If $a\in p^rA$, write $a=p^r\tilde a$ for some $\tilde a\in A$.
  Let $\phi(a)$ be the image of $\tilde a$ under the natural surjection $A\to A'$.
  Then $\phi$ identifies $A'$ with $p^rA/p^{n-r}A$.
  For $f\in L^2(A')$ define $\tilde f\in L^2(A)$ by
  \begin{equation*}
    \tilde f(u)=
    \begin{cases}
      0 & \text{ if } u\notin p^{n-r}A,\\
      f(u') & \text{ if } u\in p^{n-r}A \text{ and } \phi(u')=u.
    \end{cases}
  \end{equation*}
  The image of the map $j:f\mapsto \tilde f$ is $V_{n-2r}$.
  For each $y'\in A'$ define $\chi_{y'}\in \widehat{A'}$ by $\chi_{y'}(x')=e^{2\pi i x'\cdot y'/p^{n-2r}}$.
  This identification is compatible with $\phi$ in the sense that $\chi_y(x)=\chi_{y'}(x')$ if $\phi(x)=x'$ and $\phi(y)=y'$.
  Then for $f\in L^2(A')$, $j(T_{x'}M_{\chi_{y'}}f)=T_xM_{\chi_y}j(f)$ if $\phi(x)=x'$ and $\phi(y)=y'$.
  Thus if $k'=(x',y')$ and $k=(x,y)$, we have that $j(\W_{k'}f)=\W_kj(f)$.
  Suppose $\sigma\in Sp_{2g}(\Z/p^nZ)$.
  Then $\phi(\sigma(x,y))=q(\sigma)(\phi(x),\phi(y))$ for all $x,y\in p^rA$.
  The defining property of $W(\sigma)$ is that
  \begin{equation*}
    W(\sigma)\W_kW(\sigma)\inv = \W_{\sigma(k)}.
  \end{equation*}
  Restricting to $V_{n-2r}=j(L^2(A))$ it follows from the above that
  \begin{equation*}
    W(\sigma)\W_{k'}W(\sigma)\inv = \W_{q(\sigma)(k')},
  \end{equation*}
  which is the defining property of $W_{n-2r}(q(\sigma))$.
\end{proof}
\begin{remark}
  When $n$ is even, the same results were obtained by Dipendra Prasad \cite[Theorem~2]{MR1478492}  using the lattice model of the Weil representation.
\end{remark}
\begin{remark}
  The decomposition of the Weil representation of\linebreak $Sp_{2g}(\Z/N\Z)$ for arbitrary $N$ can be deduced easily from the results of this section.
  Indeed, if $N=\prod_i p_i^{n_i}$ is the prime factorisation, then $Sp_{2g}(\Z/N\Z)=\prod_i Sp_{2g}(\Z/p_i^{n_i}\Z)$ and the Weil representation of $Sp_{2g}(\Z/N\Z)$ is the tensor product of the Weil representations of $Sp_{2g}(\Z/p_i^{n_i}\Z)$.
\end{remark}

\end{document}